\newtheorem{theorem}{Theorem}[section]
\newtheorem{lemma}[theorem]{Lemma}
\newtheorem{proposition}[theorem]{Proposition}
\newtheorem{remark}[theorem]{Remark}
\numberwithin{equation}{section}
\newtheorem*{conjecture*}{Conjecture}
\journal{****}
\begin{document}

\begin{frontmatter}

\title{Planar Kolmogorov systems with infinitely many singular points at infinity}
\author[a]{\'Erika Diz-Pita}
\address[a]{Departamento de Estat\'istica, An\'alise Matem\'atica e Optimizaci\'on, Universidade
	de Santiago de Compostela, 15782 Santiago de Compostela, Spain }
\ead{erikadiz.pita@usc.es}
\author[b]{Jaume Llibre}
\address[b]{Departament de Matem\`atiques, Universitat Aut\`onoma de Barcelona, 08193 Bellaterra, Barcelona, Spain}
\ead{jllibre@mat.uab.cat}
\author[a]{M. Victoria Otero-Espinar}
\ead{mvictoria.otero@usc.es}
\begin{abstract}
We classify the global dynamics of the five-parameter family of planar Kolmogorov systems
\begin{equation*}
	\begin{split}
		\dot{y}&=y \left( b_0+ b_1 y z + b_2 y + b_3 z\right),\\
		\dot{z}&=z\left( c_0 + b_1 y z + b_2 y + b_3 z\right),
	\end{split}
\end{equation*}
which is obtained from the Lotka-Volterra systems of dimension three.  These systems have infinitely many singular points at inifnity. We give the topological classification of their phase portraits in the Poincar\'e disc, so we can describe the dynamics of these systems near infinity. 
We prove that these systems have 13 topologically distinct global phase portraits.
\end{abstract}
\begin{keyword}
Predator-prey system \sep Kolmogorov system \sep global phase portrait \sep Poincar\'e disc.
\end{keyword}

\end{frontmatter}

\section{Introduction} 

Kolmogorov systems are differential systems of the form  
$$
\dot{x_i}=x_i  P_i(x_1,\ldots,x_n), \;\;\; i=1,...,n,
$$
where $P_i$ are polynomials. Particular cases of these systems are, for example, Lotka-Volterra systems. All of them have been used for modelling problems from different sciences as the interaction between species \cite{Arnoedo1980,Coste, LlibreLV2,CLois,Smale}, plasma physics \cite{plasma}, hydrodynamics \cite{hidrodinamica}, chemical reactions \cite{LVchemical} or economic and social problems \cite{ecoGandolfo,Palomba,ecoarticulo}.

For the Lotka-Volterra systems in dimension three the global dynamics has been described in some particular cases. In \cite{LlibreLV0}  the authors give the global phase portraits in the Poincaré disc of a system related with the study of black holes; in \cite{LlibreLV4} the authors complete the description of the global dynamics of a system previously proposed and studied in \cite{Leach,LlibreLV3,Tudoran}.
There are also some works about the global dynamics of certain Lotka-Volterra families depending on a small number of parameters. In \cite{LlibreLV5} the authors study a family depending on two parameters, and in \cite{LlibreLV1} the family studied depends on three parameters, but with some restrictions such as all of them must be positive. 

There are few works that study these kind of systems when they have a line consisting of singular points; an example can be found
in  \cite{Sch-Vul}. 

In \cite{SIS1} and \cite{SIS2}  the global dynamics of two Kolmogorov families in dimension two was studied. Those families are obtained from general $3$-dimensional Lotka-Volterra systems depending on 12 parameters, 
\begin{equation*}
	\begin{split}
		& \dot{x} = x \: ( \:  a_0 + a_1 x + a_2 y + a_3 z \: ),\\
		& \dot{y} = y \: ( \: b_0 + b_1 x + b_2 y + b_3 z \: ), \\
		& \dot{z} = z \: ( \: c_0 + c_1 x + c_2 y + c_3 z \: ),
	\end{split}
\end{equation*}
with a rational first integral of degree two of the form $x^i y^j z^k$ by applying the Darboux theory of integrability. For the obtained families, the condition that they have a Darboux invariant of the form $e^{st} y^{\lambda_1} z^{\lambda_2}$ is required. In this work we focus on the systems studied in \cite{SIS2} which are
\begin{equation}\label{SIS2}
	\begin{split}
		\dot{y}&=y \left( b_0+ b_1 y z + b_2 y + b_3 z\right),\\
		\dot{z}&=z\left( c_0- \mu ( b_1 y z + b_2 y + b_3 z)\right),
	\end{split}
\end{equation}
and depend on six parameters. In \cite{SIS2} the authors give the topological classification of the global phase portraits in the Poincaré disc for all the values of the parameters such that $\mu\neq-1$. 
The particular case with $\mu=-1$, in which there exists a line of singular points (more precisely, all the infinity consist on singular points) was not studied,
so here we carry out the study of this case, i.e. we deal with the systems
\begin{equation}\label{sis2mu}
	\begin{split}
		\dot{y}&=y \left( b_0+ b_1 y z + b_2 y + b_3 z\right),\\
		\dot{z}&=z\left( c_0 + b_1 y z + b_2 y + b_3 z\right).
	\end{split}
\end{equation}

In this paper we study the global dynamics of systems \eqref{sis2mu} and we give the topological classification of all their global phase portraits on the Poincaré disc. Our main result is the following.

\begin{theorem}\label{th_global}
	Cubic Kolmogorov systems \eqref{sis2mu} have $13$ topologically distinct phase portraits in the Poincar\'e disc, given in Figure $\ref{fig:global_top}$.
\end{theorem}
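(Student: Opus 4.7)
The plan is to follow the standard machinery for Kolmogorov systems: analyze the finite singular points on the invariant axes, desingularize the line at infinity in the Poincaré compactification, exploit a Darboux invariant to rule out limit cycles and pin down separatrix connections, and finally enumerate parameter cases until we get 13 topologically inequivalent portraits.

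First, since $\{y=0\}$ and $\{z=0\}$ are invariant, the finite equilibria lie on the axes in the generic situation, namely $(0,0)$, $(-b_0/b_2,0)$ and $(0,-c_0/b_3)$, while in the exceptional case $b_0=c_0$ the conic $b_0+b_1yz+b_2y+b_3z=0$ becomes an entire curve of equilibria. I would compute the Jacobian at each and classify it (hyperbolic node/saddle, semi-hyperbolic, or degenerate) as a function of the parameters, using normal-form reductions where needed. The ratio $H=y/z$ satisfies $\dot H=(b_0-c_0)H$; equivalently $e^{-(b_0-c_0)t}\,y/z$ is a Darboux invariant, which is an actual rational first integral when $b_0=c_0$. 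In each open quadrant this invariant foliates the dynamics down to a one-dimensional flow, thereby excluding limit cycles (and rotational recurrent behavior) and largely forcing the $\alpha$- and $\omega$-limits of every orbit.

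Second, I would treat infinity by Poincaré compactification. A direct computation in the chart $y=1/v,\ z=u/v$ gives, for the cubic field, $\dot u=(c_0-b_0)\,u v^2$ and $\dot v=-v(b_1 u+b_2 v+b_0 v^2+b_3 u v)$, both of which vanish identically on $\{v=0\}$, confirming that the entire line at infinity consists of singular points (as announced in the abstract). Dividing by the common factor $v$ is a time rescaling that preserves orbits and their orientation on $\{v>0\}$, and it produces a polynomial system with an isolated, semi-hyperbolic singularity at the origin whose local portrait is read off by a center-manifold/normal-form analysis; when further degeneracy appears (for instance $b_1=0$), one resolves it by weighted blow-ups. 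An analogous computation in the chart $U_2$ handles the other endpoint of the circle at infinity and tells us how the finite flow is glued to the boundary of the Poincaré disc.

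Finally, I would reduce the number of essential parameters via the rescalings $(y,z,t)\mapsto(\alpha y,\beta z,\gamma t)$ together with sign reversals, and then enumerate the surviving sign patterns of $b_0,\ c_0,\ b_0-c_0,\ b_1,\ b_2,\ b_3$. For each case I would assemble the global phase portrait on the Poincaré disc from: (i) the finite equilibria and their topological types, (ii) the desingularized behavior at the two points at infinity, and (iii) the separatrix skeleton forced by the Darboux invariant. Pairwise topological inequivalence of the resulting portraits is then verified by standard invariants (number and type of separatrices, number of canonical regions, and orientation), and the list is matched against the 13 pictures in Figure~\ref{fig:global_top}. The main obstacle will be the desingularization at infinity: justifying that the division by $v$ gives a topologically faithful flow near the boundary, and carefully resolving the borderline subcases where the reduced singularity is not semi-hyperbolic or where $b_0=c_0$ produces a conic of equilibria intersecting the dynamics in a non-generic way.
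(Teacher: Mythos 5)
Your outline follows the same overall strategy as the paper (classify finite equilibria on the axes, compactify, divide out the common factor $v$ at infinity, enumerate sign cases, distinguish portraits by invariants), and your observation that $\dot H=(b_0-c_0)H$ for $H=y/z$ is correct and is a nice alternative to the paper's way of excluding limit cycles. However, there is a genuine gap at the decisive step. You assert that the Darboux invariant, together with the local data, ``largely forc[es]'' the $\alpha$- and $\omega$-limits, and you then propose to assemble \emph{one} global portrait per parameter case. This fails in exactly two of the cases (cases 1.2 and 1.3 of the paper with $b_1>0$): there the local phase portraits at the finite and infinite singular points admit \emph{three} non-equivalent ways of connecting the separatrices in the third quadrant, and the level sets of $y/z$ (straight lines through the origin) do not discriminate between them. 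The paper shows all three actually occur, by exhibiting the invariant straight line $y=-c_0/b_3$ at the bifurcation value $b_2=b_1c_0/b_3$ (giving the saddle-to-infinity connection) and then perturbing one parameter to realize the other two configurations. These ambiguous cases are not a technicality: they produce the equivalence classes R2, R3 and R4, i.e.\ three of the thirteen portraits in the statement. Without an argument that all three connections are realized, you cannot reach the count of $13$.

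Two smaller points. First, the degeneracy you flag at infinity is misplaced: $b_1=0$ is excluded by the normalization (it would make the system non-cubic/degenerate), and the case that actually requires extra work after dividing by $v$ is $b_2=0$ in the chart $U_1$ (respectively $b_3=0$ in $U_2$), where the reduced origin is nilpotent rather than semi-hyperbolic; this is handled by the nilpotent singular point theorem, not by the semi-hyperbolic one. Second, the issue you correctly identify as ``the main obstacle'' --- that dividing by $v$ reverses time on $v<0$ and that one must recover the local behaviour of the original flow along the continuum of singular points at infinity --- is left unresolved in your proposal; the paper settles it by invoking the stable/unstable manifold theorem for normally hyperbolic manifolds of singular points (every infinite singular point other than the two chart origins has exactly one nonzero eigenvalue $-b_1u_0$, so exactly one orbit arrives at or leaves it), and by applying the Markus--Neumann--Peixoto theorem only on the \emph{open} Poincar\'e disc, where the separatrices are finite in number. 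You would need to supply both of these justifications to make the argument complete.
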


\begin{figure}[h] 
	\centering
	\scriptsize
	\stackunder[2pt]{\includegraphics[width=3cm]{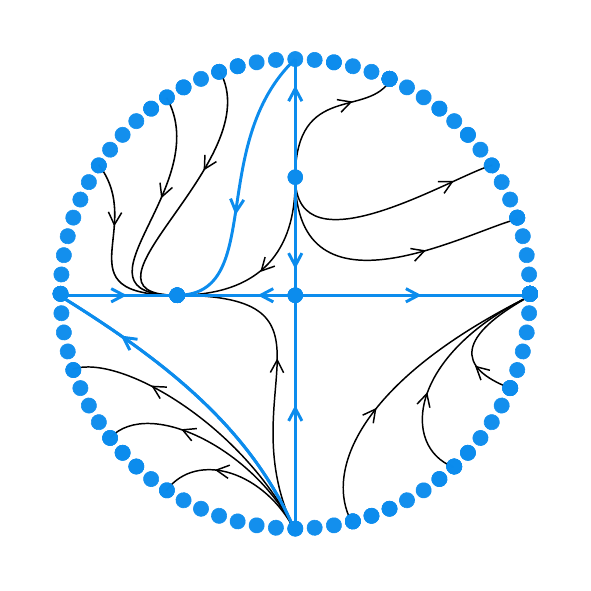}}{(R1)}
	\stackunder[2pt]{\includegraphics[width=3cm]{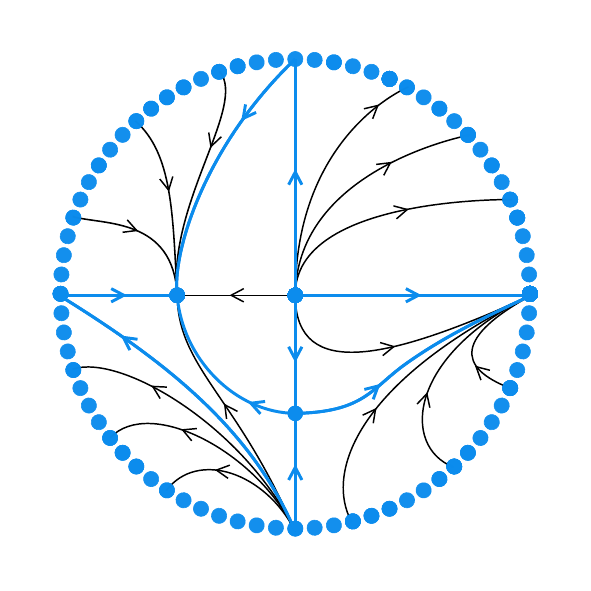}}{(R2)}
	\stackunder[2pt]{\includegraphics[width=3cm]{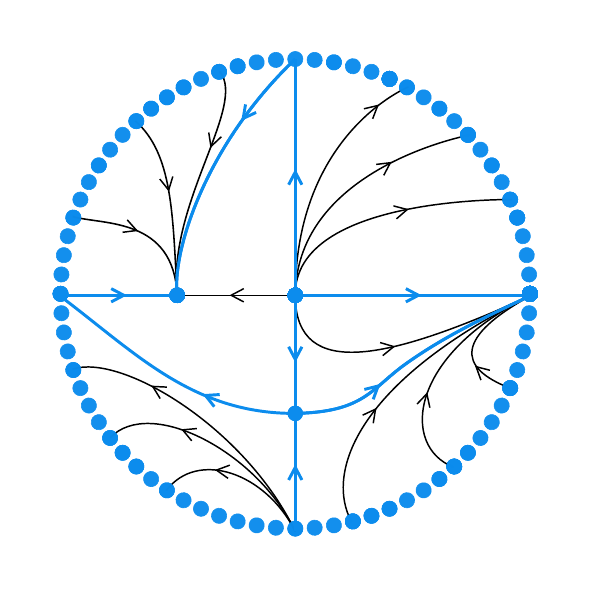}}{(R3)}
	\stackunder[2pt]{\includegraphics[width=3cm]{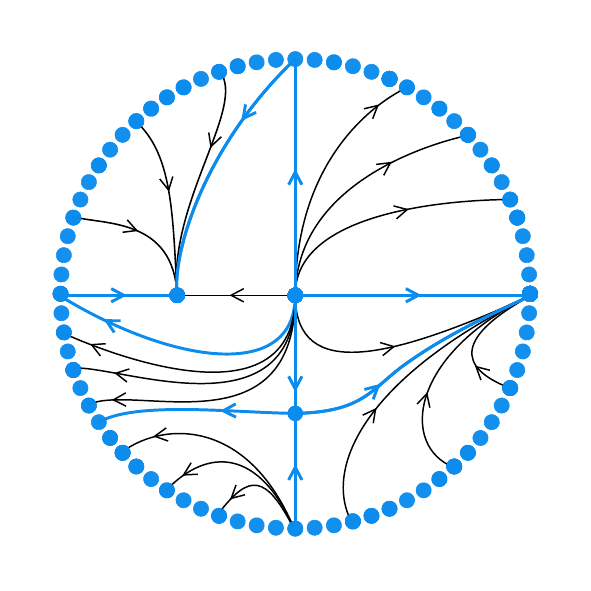}}{(R4)}
	\stackunder[2pt]{\includegraphics[width=3cm]{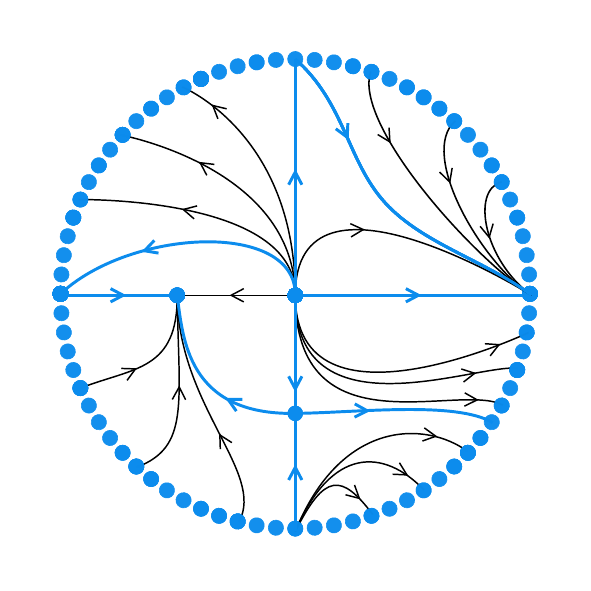}}{(R5)}
	\stackunder[2pt]{\includegraphics[width=3cm]{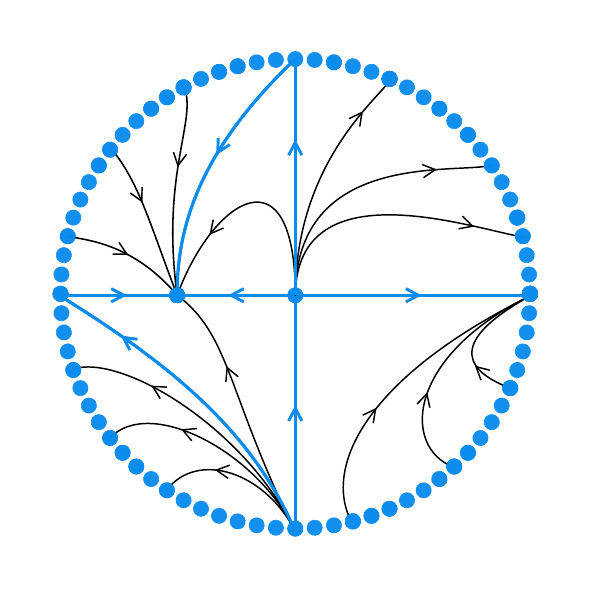}}{(R6)}
	\stackunder[2pt]{\includegraphics[width=3cm]{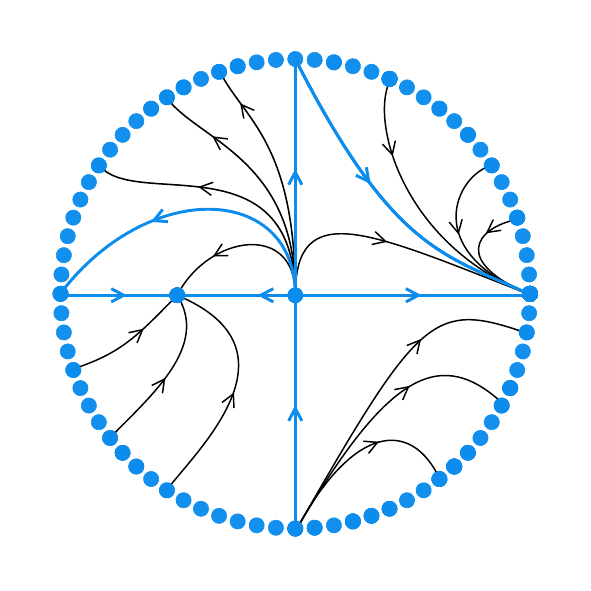}}{(R7)}
	\stackunder[2pt]{\includegraphics[width=3cm]{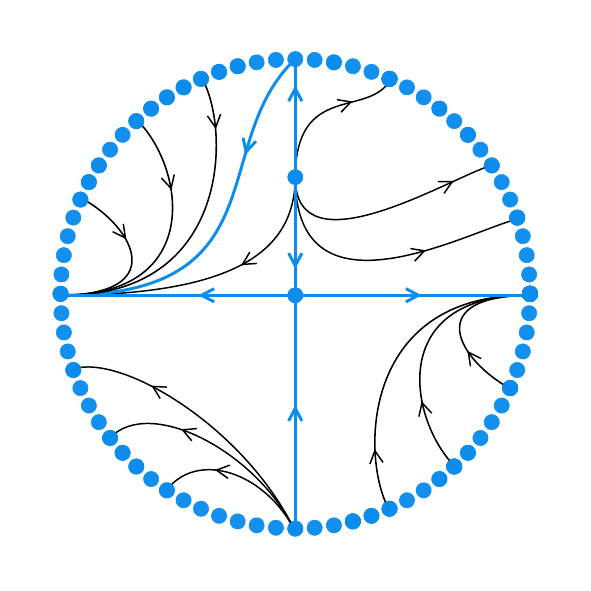}}{(R8)}
	\stackunder[2pt]{\includegraphics[width=3cm]{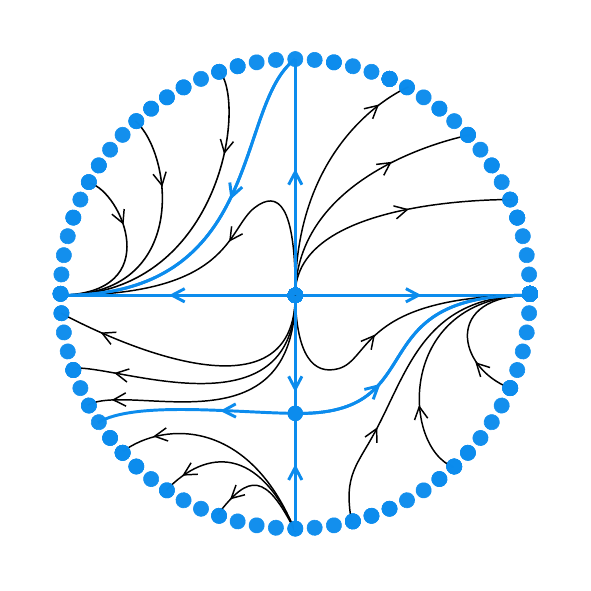}}{(R9)}
	\stackunder[2pt]{\includegraphics[width=3cm]{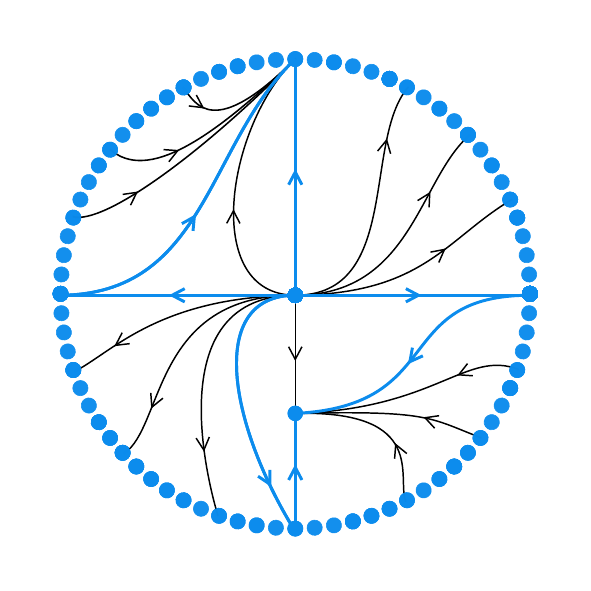}}{(R10)}
	\stackunder[2pt]{\includegraphics[width=3cm]{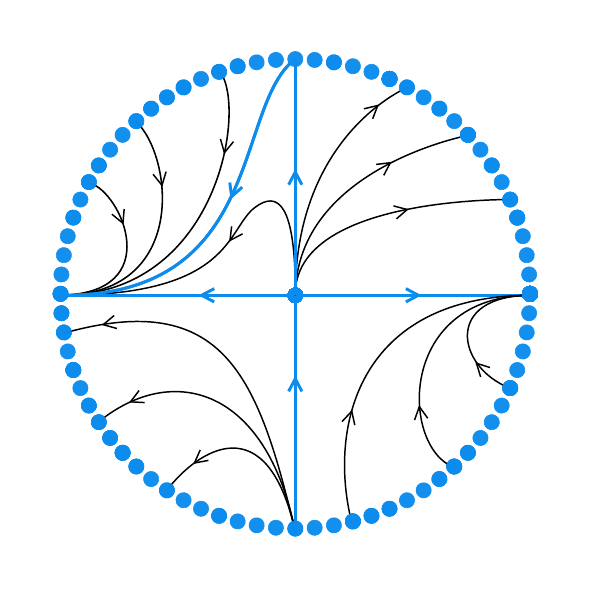}}{(R11)}
	\stackunder[2pt]{\includegraphics[width=3cm]{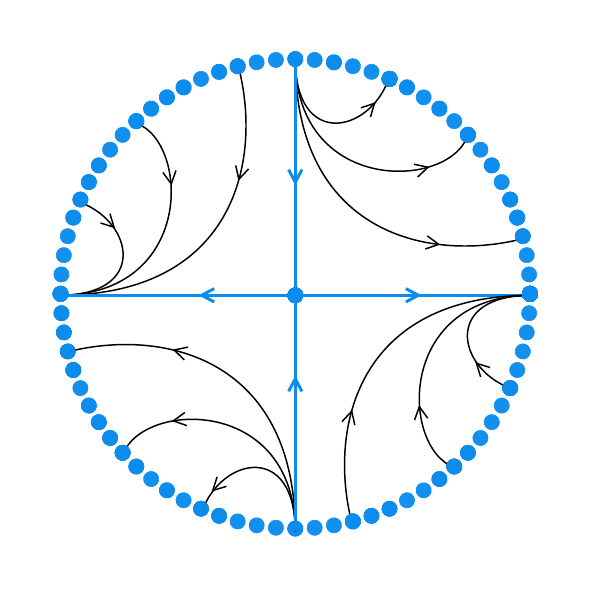}}{(R12)}
	\stackunder[2pt]{\includegraphics[width=3cm]{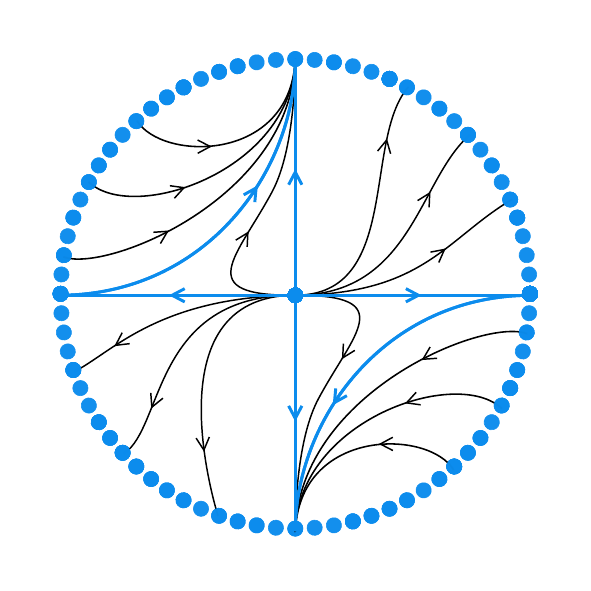}}{(R13)}
	\caption{The topologically distinct phase portraits of systems \eqref{sis2mu} in the Poincar\'e disc. The orbits drawn in blue color are the separatrices, while the drawn orbits which are not separatrices are drawn in black color}
	\label{fig:global_top}
\end{figure}

In order to give a detailed proof of Theorem \ref{th_global}, in Section \ref{sec:preliminaries} we give some definitions and results that will be useful.
In Sections \ref{sec:finite} and  \ref{sec:infinite} we give, respectively, the classification of the local phase portraits of the finite and infinite singular points, and in Section \ref{sec:global} we study the global phase portraits on the Poincaré disc to prove Theorem \ref{th_global}.

\section{Preliminaries}\label{sec:preliminaries}

We shall study systems \eqref{sis2mu} and it will suffice to do so under the conditions given in the following result.

\begin{proposition}
	To determine all global phase portraits of systems \eqref{sis2mu} it is sufficient to study those systems whose parameters satisfy the following conditions:
	\begin{align*}
		H= \left\lbrace b_1\neq 0, c_0 - b_0 \neq 0, b_0\geq 0, b_2\geq 0, b_3\geq 0, b_3^2+c_0^2\neq 0, b_2^2+b_0^2\neq0  \right\rbrace.
	\end{align*}
	If $b_2b_3=0$ then it is enough to study the case with $b_1>0$, and if $b_0=0$ it is enough to consider $c_0>0$. 
\end{proposition}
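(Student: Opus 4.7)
The plan is to identify explicit symmetries of the family \eqref{sis2mu} that act on the parameter tuple $(b_0,b_1,b_2,b_3,c_0)$, and then to single out the parameter values for which the dynamics is degenerate or not genuinely two-dimensional.

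I would start by recording three involutions. First, time reversal $\tau:t\mapsto -t$ replaces $(b_0,b_1,b_2,b_3,c_0)$ by its negative and simply reverses all orbit orientations. Second, $\sigma_y:(y,z)\mapsto(-y,z)$ preserves the cubic Kolmogorov form of \eqref{sis2mu} and produces the parameter tuple $(b_0,-b_1,-b_2,b_3,c_0)$. Third, $\sigma_z:(y,z)\mapsto(y,-z)$ produces $(b_0,-b_1,b_2,-b_3,c_0)$. All three leave the topological type of the phase portrait on the Poincar\'e disc invariant (up to orientation reversal for $\tau$, which is irrelevant for the classification). Applying $\tau$ whenever $b_0<0$, then $\sigma_y$ whenever $b_2<0$, then $\sigma_z$ whenever $b_3<0$ produces $b_0\ge 0$, $b_2\ge 0$, $b_3\ge 0$; each intermediate step may negate $b_1$, but at this stage no sign condition on $b_1$ is needed.

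Next I would exclude the degenerate parameter values. If $b_1=0$, \eqref{sis2mu} degenerates to a quadratic Lotka--Volterra system, outside the class of \emph{cubic} Kolmogorov systems treated here. If $c_0-b_0=0$, then $\dot y/y=\dot z/z$, so $y/z$ is a first integral and every orbit lies on a line through the origin, giving essentially one-dimensional dynamics. If $b_0=b_2=0$ (i.e. $b_0^2+b_2^2=0$), evaluating $\dot y$ on $\{z=0\}$ gives $\dot y\equiv 0$, so the whole $y$-axis is a continuum of equilibria; symmetrically, $b_3=c_0=0$ makes the whole $z$-axis singular. These four exclusions, together with the sign normalizations above, yield exactly the conditions in the set $H$.

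For the two parenthetical refinements, I would exploit the residual freedom left after the normalization. When $b_2=0$, the involution $\sigma_y$ still preserves $b_0$, $b_2$, $b_3$, $c_0$ and only negates $b_1$; hence $b_1>0$ may be assumed. The case $b_3=0$ is identical via $\sigma_z$. When $b_0=0$, the composition $\sigma_z\circ\sigma_y\circ\tau$ fixes $b_0$, $b_2$, $b_3$ while sending $(b_1,c_0)\mapsto(-b_1,-c_0)$, and since $c_0-b_0\ne 0$ forces $c_0\ne 0$, we may further impose $c_0>0$ (possibly changing the sign of $b_1$, which is then re-fixed by $\sigma_y$ or $\sigma_z$ if additionally $b_2b_3=0$). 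The only non-trivial point is the bookkeeping in this last paragraph: one must verify that every composite symmetry really preserves the sign conditions already obtained and does not reintroduce negative values of $b_0$, $b_2$, or $b_3$. Once that is checked, the remainder is a mechanical substitution into the right-hand side of \eqref{sis2mu}.
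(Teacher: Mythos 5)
Your proof is correct and, unlike the paper's, it is self-contained: the paper's own proof simply invokes Propositions 1 and 2 and Corollary 4.1 of \cite{SIS2}, where these reductions are carried out for the six-parameter family with general $\mu$, and then specializes to $\mu=-1$; you reconstruct the underlying argument explicitly. Your three involutions are computed correctly ($\sigma_y$ sends $(b_0,b_1,b_2,b_3,c_0)$ to $(b_0,-b_1,-b_2,b_3,c_0)$, $\sigma_z$ to $(b_0,-b_1,b_2,-b_3,c_0)$, and $\tau$ negates all five parameters), so the chain $\tau$, $\sigma_y$, $\sigma_z$ does normalize $b_0,b_2,b_3\ge 0$, and the residual use of $\sigma_y$ or $\sigma_z$ when $b_2b_3=0$, and of $\sigma_z\circ\sigma_y\circ\tau$ when $b_0=0$, yields the two parenthetical refinements exactly as you describe (note that $b_0=0$ forces $b_2\neq 0$ by the condition $b_0^2+b_2^2\neq 0$, so in the combined case only $\sigma_z$ is ever needed to re-fix the sign of $b_1$). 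The one place where your justification diverges in substance from the paper's is in the exclusions $b_1=0$ and $c_0-b_0=0$: the paper (via \cite{SIS2}) discards the former because it yields quadratic Lotka--Volterra systems whose global phase portraits are already known, and the latter because the conic $b_0+b_1yz+b_2y+b_3z=0$ then consists entirely of singular points, so the system has infinitely many finite singular points, which is the same degeneracy criterion used to exclude $b_0=b_2=0$ and $b_3=c_0=0$. Your reasons (drop of degree, existence of the first integral $y/z$) are correct observations, but you should state explicitly, as the paper does, that these cases are excluded from the classification by convention (known or degenerate portraits) rather than shown to reproduce portraits already realized inside $H$ --- otherwise the ``it is sufficient'' claim is not literally established for those parameter values.
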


\begin{proof}
	The proof follows from Propositions 1 and 2, and Corollary 4.1 of \cite{SIS2}. In those results it is proved that Kolmogorov systems \eqref{SIS2} can be reduced to satisfy conditions
	\begin{align*}
		\tilde{H}= \left\lbrace b_1\neq 0, b_0 \mu+c_0 \neq 0, b_0\geq 0, b_2\geq 0, b_3\geq 0, (\mu b_3)^2+c_0^2\neq 0, b_2^2+b_0^2\neq0  \right\rbrace,
	\end{align*}
	either using symmetries, or eliminating known phase portraits, or eliminating phase portraits with infinitely many finite singular points. Asumming $\mu=-1$ the conditions $\tilde{H}$ become the conditions $H$ given above. Also it was proved in the mentioned results of \cite{SIS2} that if $b_2b_3=0$ then it is enough to study the case with $b_1>0$, as case $b_1<0$ can be reduced to this one by symmetry. Similarly, in the case with $b_0=0$ it is enough to consider $c_0>0$. 
\end{proof}

As we want to study the global dynamics of systems \eqref{sis2mu}
we must determine the behaviour of the orbits near the infinity. In order to do that we will use the \textit{Poincar\'e compactification}.

We call $\mathbb{S}^2=\left\lbrace  y \in \mathbb{R}^3 : y_1^2+y_2^2+y_3^2=1 \right\rbrace$ the Poincaré sphere, and we will consider a polynomial vector field defined in its tangent plane at the point $(0,0,1)$.
Let that field be $X=(P(x,y),Q(x,y))$ and $d$ the maximum of the degrees of the polynomials $P$ and $Q$.  We can obtain another vector field $\overline{X}$ on $\mathbb{S}^2\textbackslash \mathbb{S}^1$ by means of the differentials $Df^+$ and $Df^-$ of the central projections. Although $\overline{X}$ is not defined on the equator $\mathbb{S}^1$, which corresponds with the points of the infinity of $\mathbb{R}^2$, multiplying by $y_3^d$ we can extended it analytically to another vector field $\rho(X)$ defined on the closed Poincaré sphere. We say that $\rho(X)$ is the \textit{Poincar\'e compactification} of the vector field $X$ on $\mathbb{R}^2$.

We will work with the expressions of the Poincaré compactification in the local charts $(U_i,\phi_i)$ and $(V_i,\psi_i)$ where,  for $i=1,2,3$:
\begin{equation*}
	U_i= \left\lbrace w \in \mathbb{S}^2 :w_i > 0  \right\rbrace, \;\;\;\, \phi_i: U_i \longrightarrow \mathbb{R}^2, \;\;\;\;  V_i = \left\lbrace w \in \mathbb{S}^2 : w_i < 0  \right\rbrace, \;\;\;\; \psi_i: V_i \longrightarrow \mathbb{R}^2,
\end{equation*}
and $\phi_i(w) = \psi_i(w) = \left( w_m/w_i, w_n/w_i \right)$ for $m < n$ and $m,n \neq i$. 

As the field $\rho(X)$ on $\mathbb{S}^2$ is symmetric with respect to the origin of $\mathbb{R}^3$, it will be enough to study the orbits on the closed northern hemisphere of $\mathbb{S}^2$, which we will project  onto the plane $y_3=0$ by means of the orthogonal projection, so we will draw the global phase portraits in the so called 
\textit{Poincar\'e disc}, denoted by $\mathbb{D}^2$.

We can cover all the Poincaré disc with the charts $U_1$, $U_2$, $V_1$ and $V_2$ so it will be no necessary to study the expressions of the field on $U_3$ and $V_3$. Also, as the expression for $\rho(X)$ in the local charts $(V_i,\psi_i)$, with $i=1,2$, can be obtained  multiplying by $(-1)^{d-1}$ the expression in $(U_i,\phi_i)$, it will be enough to study the Poincaré compactification in $U_1$ and $U_2$. 

The expression of $\rho(X)$ in the local chart $(U_1,\phi_1)$ is
\begin{equation}\label{Poincare_comp_U1}
	\dot{u}= v^d \left[ -u \: P\left( \frac{1}{v}, \frac{u}{v}\right)  + Q\left( \frac{1}{v}, \frac{u}{v}\right) \right], \;
	\dot{v}= - v^{d+1} \: P\left( \frac{1}{v}, \frac{u}{v}\right),
\end{equation}
and in the local chart $(U_2,\phi_2)$ is
\begin{equation}\label{Poincare_comp_U2}
	\dot{u}= v^d \left[ P\left( \frac{1}{v}, \frac{u}{v}\right) - u Q\left( \frac{1}{v}, \frac{u}{v}\right) \right], \;
	\dot{v}= - v^{d+1} \: P\left( \frac{1}{v}, \frac{u}{v}\right).
\end{equation}

The Poincaré compactification will allow to study the \textit{infinite singular points} of $X$, which are the singular points of $\rho(X)$  over boundary of the Poincaré disc. Note that if we have a singular point $p\in \mathbb{S}^1$ then the opposite $-p$ is also a singular point and it has the same stability if $d$ is odd and opposite  stability if $d$ is even. We note that the points at the infinity in the local charts $U_i$ and $V_i$ for $i=1,2$ have coordinates $(u,0)$. For more details about the Poincaré compactification see Chapter 5 of \cite{Libro}.


To draw and classify the phase portraits on Poincaré disc, we have to pay special attention to the \textit{separatrices}. The separatrices of a polynomial differential system are:
\begin{enumerate}
	\item[(i)] the singular points;
	\item[(ii)] the periodic orbits for which there does not exist a neighborhood entirely consisting of periodic orbits; 
	\item[(iii)] all the orbits at the infinity;
	\item[(iv)] the orbits $\gamma(p)$ homeomorphic to $\mathbb{R}$ for which there does not exist a neighborhood $N$ of $\gamma(p)$ such that
	\begin{enumerate}
		\item[(1)] for all $q\in N$, $\alpha(q)=\alpha(p)$ and $\omega(q)=\omega(p)$,
		\item[(2)] the boundary $\partial N$ of $N$, that is $\partial N= \overline{N} \textbackslash N$, is formed by $\alpha(p)$, $\omega(p)$ and two orbits $\gamma(q_1)$ and $\gamma(q_2)$ such that $\alpha(p)=\alpha(q_1)=\alpha(q_2)$ and $\omega(p)=\omega(q_1)=\omega(q_2)$. 
	\end{enumerate}
\end{enumerate}
In an analytic system with isolated singular points this corresponds with 
all the orbits at the infinity, the finite singular points, the orbits on the boundary of a hyperbolic sector, and the limit cycles, for more details see \cite{Neumann}.

We call \textit{canonical regions} to each one of the
connected components resulting from removing all the sepa\-ratrices from $\mathbb{D}^2$, and \textit{separatrix configuration} of  $\pi(\rho(X))$ to the union of all the separatrices together with an orbit of each canonical region. 

We recall that two polynomial vector fields $X_1$ and $X_2$ are \textit{topologically equivalent} if there exists a homeomorphism on the Poincar\'e disc that sends orbits of $X_1$ to orbits of $X_2$, preserving or reversing the orientation of all the orbits, and it also preserves the infinity. The same definition is applicable to separatrix configurations.

The following result of Markus \cite{Markus}, Neumann \cite{Neumann} and Peixoto \cite{Peixoto} allows to study only the separatrix configurations to determine the topological classification of a polynomial differential system in the Poincaré disc.

\begin{theorem}\label{th_MNP}
	The phase portraits in the Poincaré disc of two compactified polynomial vector fields $\pi(\rho(X_1))$ and $\pi(\rho(X_2))$ with finitely many separatrices are topologically equivalent if and only if their separatrix configurations are topologically equivalent.
\end{theorem}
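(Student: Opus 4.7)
The plan is to follow the three-stage structure suggested by the paper's section outline: finite singularities, infinite singularities, and global assembly. First I would invoke the preliminary Proposition to restrict the parameters to the region $H$, so that the analysis is confined to $b_1\neq 0$, $b_0,b_2,b_3\geq 0$, $c_0\neq b_0$, together with the non-degeneracy conditions $b_3^2+c_0^2\neq 0$ and $b_0^2+b_2^2\neq 0$, with the further sign normalisations $b_1>0$ when $b_2b_3=0$ and $c_0>0$ when $b_0=0$. Because the axes $\{y=0\}$ and $\{z=0\}$ are invariant for \eqref{sis2mu}, the finite singular points are readily located: the origin, the two points on the axes coming from $b_0+b_2y=0$ and $c_0+b_3z=0$ when they exist in the closed first quadrant, and the affine curve $b_1yz+b_2y+b_3z+b_0=0$ (which must be intersected with $c_0=b_0$ to supply genuine singular points, a degenerate case excluded by $H$). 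I would then compute the Jacobian at each of these points and, for every combination of signs of $b_0,b_2,b_3,c_0$ allowed in $H$, read off the local topological type (hyperbolic node/saddle, semi-hyperbolic, or nilpotent), appealing to the standard local theorems in Chapter~2 of the cited reference when needed.

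Second, I would push the system through the Poincaré compactification using formulas \eqref{Poincare_comp_U1} and \eqref{Poincare_comp_U2} with $d=3$. The essential feature, already flagged in the title and abstract, is that on the line at infinity the compactified vector field vanishes identically, so that every point of $\mathbb{S}^1$ is singular; consequently the linear part alone cannot resolve the local dynamics. The standard remedy, which I would apply chart by chart, is to factor out the common factor $v$ from the compactified field (equivalently, to perform a time rescaling $d\tau=v\,dt$) to obtain a reduced field whose singularities on $\{v=0\}$ are isolated, and then to interpret orbits of the original compactification as orbits of the reduced one with the direction reversed wherever $v<0$ on the side of interest and with $\{v=0\}$ filled by equilibria. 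The reduced field in charts $U_1,U_2$ is polynomial of small degree, so its finitely many equilibria on $v=0$ can be classified by the usual hyperbolic/semi-hyperbolic/nilpotent theorems, yielding a finite list of local pictures on each arc of infinity; when a point is nilpotent or linearly zero, the characteristic directions can be resolved by a directional blow-up.

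Third, I would assemble the global phase portraits. For each stratum of the parameter space $H$ determined by the sign/vanishing of $b_0,b_2,b_3,c_0$ and the positivity of the discriminant-type quantities that control which semi-axis singularities appear, I would glue the local information obtained in the previous two steps, using invariance of the axes and of the curve $b_1yz+b_2y+b_3z+b_0=0$ as the principal separating objects together with the $\alpha$- and $\omega$-limits forced by the infinite behaviour. Absence of limit cycles should follow either from the existence of a Darboux-type invariant inherited from \cite{SIS2} or from Dulac/Bendixson-type arguments on each invariant region cut out by the axes. Once the separatrix configuration is drawn in each parameter stratum, Theorem~\ref{th_MNP} reduces the global topological classification to a comparison of these configurations; I would pair up strata that give homeomorphic skeletons and distinguish the remaining ones by combinatorial invariants (number and type of finite singularities, arrangement of separatrices at infinity, number of canonical regions) to arrive at exactly the $13$ pictures in Figure~\ref{fig:global_top}.

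The hard part will be the second step: making rigorous the passage from the degenerate compactified field (with the whole line at infinity singular) to a tractable reduced system, and carefully tracking which arcs of infinity are attracting or repelling for the original flow after the time rescaling changes sign across $v=0$. A close second obstacle is the bookkeeping of the parameter strata in step three: one has to cover $H$ exhaustively without double-counting, and then prove that the $13$ equivalence classes are pairwise inequivalent, which requires exhibiting a distinguishing topological invariant for every pair. Everything else is organised case analysis supported by the Poincaré compactification formulas and the Markus--Neumann--Peixoto theorem recorded above.
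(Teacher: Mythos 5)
Your proposal does not address the statement it is supposed to prove. The statement is the Markus--Neumann--Peixoto theorem itself: for \emph{any} two compactified polynomial vector fields with finitely many separatrices, topological equivalence of the phase portraits in the Poincar\'e disc is equivalent to topological equivalence of the separatrix configurations. What you have written is instead an outline of the proof of the paper's main classification result (Theorem \ref{th_global}) for the specific family \eqref{sis2mu} --- locating finite and infinite singularities, reducing the degenerate line at infinity, and assembling the global pictures. Worse, your outline explicitly \emph{invokes} Theorem \ref{th_MNP} in its third step (``Theorem \ref{th_MNP} reduces the global topological classification to a comparison of these configurations''), so as an argument for Theorem \ref{th_MNP} it is circular. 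In the paper this theorem is not proved at all: it is a classical result quoted from Markus, Neumann and Peixoto, and the paper's contribution is only the observation (made after the statement) that it can still be applied to systems \eqref{sis2mu} on the \emph{open} Poincar\'e disc even though the closed disc has infinitely many singular points on its boundary.

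If you genuinely wanted to prove the statement, the structure would be entirely different and independent of the family \eqref{sis2mu}. The ``only if'' direction is immediate, since a topological equivalence of the flows carries singular points, limit cycles, orbits at infinity and boundaries of hyperbolic sectors of one field to those of the other, hence restricts to an equivalence of the separatrix configurations. The substantial direction is the converse: one must show that each canonical region (a connected component of the complement of the separatrix skeleton) is a \emph{parallel} region --- homeomorphic to a strip, an annulus, or a spiral/radial region with all orbits of the same asymptotic type --- and then extend a given homeomorphism between the two separatrix configurations across each canonical region, matching orbits to orbits, and verify that these extensions glue continuously along the separatrices. None of this appears in your proposal, so as a proof of Theorem \ref{th_MNP} it has a complete gap rather than a fixable flaw.
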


Although this result can be applied only to vector fields with finitely many separatrices, and this will not be the case of systems \eqref{sis2mu}, we can apply it to those systems in the open Poincaré disc. If two phase portraits are topologically distinct in the open Poincaré disc, they will be distinct in the closed disc and if two phase portraits are topologically equivalent in the open Poincaré disc, they will be still equivalent if we add the boundary filled of singular points and consider the closed Poincaré disc.

As we have already mentioned systems \eqref{sis2mu} will have an infinite number of singular points, namely all points at infinity. To study those singular points which form a continuum we will need the following result, which can be found in \cite{Devaney,Hirsch}. 
Let  $\varphi_t$ be a smooth flow on a manifold $M$, and consider a submanifold $C$ consisting entirely of singular points. The submanifold $C$ is said \textit{normally hyperbolic} if the tangent bundle to $M$ over $C$ splits into three subbundles $TC$, $E^s$ and $E^u$ invariant under the flow and satisfying that
$d\varphi_t$ contracts (respectively, expands) $E^s$ (respectively, $E^u$) exponentially and $TC$ is the tangent bundle of $C$. 

\begin{theorem}\label{th_normhyp}
	Let $C$ be a normally hyperbolic submanifold consisting of singular points for a flow $\varphi_t$. Then there exist smooth stable and unstable manifolds tangent along $C$ to $E^s \oplus TC$ and $E^u\oplus TC$ respectively. Furthermore, both $C$ and the stable and unstable manifolds are permanent under small perturbations of the flow.
\end{theorem}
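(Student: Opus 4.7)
The plan is to proceed in four stages, combining a standard Poincaré-disc classification with one nonstandard ingredient made necessary by the fact that, for these systems, the \emph{entire} equator of the Poincaré sphere consists of singular points.

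First I would invoke the Proposition above to restrict attention to parameters in the region $H$, so that $b_0, b_2, b_3 \geq 0$, $c_0 \neq b_0$ and $b_1 \neq 0$, together with the symmetry reductions on the sign of $b_1$ when $b_2 b_3 = 0$ and on the sign of $c_0$ when $b_0 = 0$. This cuts the problem down to a bounded combinatorial case analysis rather than a genuine five-parameter search.

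Second, I would determine all finite singular points and their local topological type (the content of Section \ref{sec:finite}). Invariance of the axes $\{y=0\}$ and $\{z=0\}$, immediate from the Kolmogorov form, confines every finite equilibrium to those axes: the origin, and, when $b_2$ or $b_3$ is positive, the points $(-b_0/b_2,0)$ and $(0,-c_0/b_3)$. Because $c_0 - b_0 \neq 0$, the two parenthesized expressions in \eqref{sis2mu} cannot vanish simultaneously off the axes, so there is no interior equilibrium; a standard Jacobian calculation then gives the type of each axis singularity as a piecewise-constant function of the parameters in $H$.

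Third, which is the main novelty, I would analyse the behaviour near infinity via formulas \eqref{Poincare_comp_U1} and \eqref{Poincare_comp_U2}. A direct computation in chart $(U_1,\phi_1)$ reduces the compactified field to
\begin{equation*}
\dot u = (c_0 - b_0)\, u\, v^2, \qquad \dot v = -v\bigl(b_1 u + b_2 v + b_3 u v + b_0 v^2\bigr),
\end{equation*}
so the whole line $\{v=0\}$ is a curve of equilibria with Jacobian $\mathrm{diag}(0,-b_1 u)$ at $(u,0)$. Hence every equator point with $u \neq 0$ is normally hyperbolic, and Theorem \ref{th_normhyp} supplies one-dimensional stable or unstable fibres transverse to the equator, whose exponential rate is $-b_1 u$. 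The analogous chart $(U_2,\phi_2)$ handles the direction where $z \to \infty$ and produces the same structure, with genuine degeneracy only at the two corner points where the coordinate axes meet infinity. At each corner I would resolve the local portrait by a directional blow-up, and then match the resulting sectors to the hyperbolic fibres coming in from either side.

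Finally, I would stratify $H$ by the signs of the distinguishing quantities $b_0$, $b_2$, $b_3$, $c_0$, $c_0 - b_0$ and $b_1$, and in each open stratum assemble the global phase portrait by combining the invariant axes, the finite equilibria, the normally hyperbolic fibres, the corner resolutions, and the absence of any interior singular point (which, together with Poincaré--Bendixson, excludes interior limit cycles). Listing the resulting separatrix configurations and applying Theorem \ref{th_MNP} in the open Poincaré disc, as is legitimate by the remark following that theorem, should yield exactly the thirteen pairwise inequivalent configurations of Figure \ref{fig:global_top}. The step I expect to be most delicate is the corner analysis combined with the bookkeeping across strata: small changes in the parameters can alter the number of sectors at a corner and hence the topological type, and one must verify both that no two strata produce equivalent separatrix configurations and that no further configuration has been overlooked.
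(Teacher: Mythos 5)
Your proposal does not address the statement it is supposed to prove. The statement in question is Theorem \ref{th_normhyp}, the normally hyperbolic invariant manifold theorem: given a submanifold $C$ of singular points that is normally hyperbolic, there exist smooth stable and unstable manifolds tangent along $C$ to $E^s\oplus TC$ and $E^u\oplus TC$, and both $C$ and these manifolds persist under small perturbations of the flow. This is a classical result of invariant manifold theory; the paper does not prove it but imports it from the literature (\cite{Devaney,Hirsch}), and a genuine proof would go through the graph transform or the Lyapunov--Perron method on the normal bundle over $C$, using the exponential contraction and expansion of $E^s$ and $E^u$ relative to the (trivial) dynamics on $TC$ to set up a fibre contraction whose fixed point is the desired invariant section, followed by a smoothness and persistence argument.

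What you have written instead is a proof outline for Theorem \ref{th_global}, the classification of the thirteen global phase portraits. Worse, as an argument for Theorem \ref{th_normhyp} it is circular: in your third stage you explicitly \emph{invoke} Theorem \ref{th_normhyp} to supply the one-dimensional stable and unstable fibres transverse to the equator of equilibria, so the statement you are meant to establish appears as an ingredient of your own reasoning. Nothing in your four stages --- the reduction to the parameter region $H$, the finite singular point analysis, the charts $U_1$ and $U_2$, the blow-ups at the corners, or the stratification of $H$ --- says anything about why normal hyperbolicity of a continuum of equilibria yields smooth stable and unstable manifolds or why these are permanent under perturbation. The proposal therefore contains no proof of the stated theorem at all; the missing content is the entirety of the invariant manifold construction.
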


\section{Local phase portraits of the finite singular points}\label{sec:finite}

Asumming the condition $\mu=-1$, from Section 5 in \cite{SIS2} we know that the singular points of systems \eqref{sis2mu} are
\begin{equation*}
	P_0=(0,0),
	\hspace{0.4cm}		
	P_1=\left(0, -\dfrac{c_0}{ b_3} \right) \;\text{if} \; b_3 \neq0 	\hspace{0.2cm}  \text{and} \hspace{0.2cm}
	P_2=\left( -\dfrac{b_0}{b_2},0\right)\; \text{if} \;  b_2\neq0, 
\end{equation*}	
and from Table 1 of \cite{SIS2}, we distinguish four cases depending on the existence of the singular points. These cases are given in Table \ref{tab:cases1-4}.

\begin{table}[H]
	\begin{center}
	\begin{tabular}{|c|ll|}
			\hline
			\textbf{Case} & \textbf{Conditions} & \textbf{Finite singular points} \\
			\hline
			\hline
			1&  $ b_3 \neq 0$, $b_2\neq 0$. & $P_0$, $P_1$, $P_2$. \\	\hline
			2 &  $ b_3 \neq 0$, $b_2=0$, $b_0\neq0$.  & $P_0$, $P_1$.\\
			\hline
			3 &  $ b_3=0$, $c_0\neq 0$, $b_2\neq 0$.  & $P_0$, $P_2$. \\
			\hline
			4 &  $ b_3=0$, $c_0\neq 0$, $b_2=0$, $b_0\neq 0$.  & $P_0$.  \\
			\hline
		\end{tabular}
			\caption{The different cases for the finite singular points.}
		\label{tab:cases1-4}
			\end{center}
\end{table}

Also from \cite{SIS2}, taking $\mu=-1$ in Lemma 1 and Tables 2 to 5,
we get the following local classification in 15 subcases for the finite singular points.

\begin{table}[H]
	\begin{center}
	\begin{tabular}{|c|p{4.7cm}p{7.8cm}|}
			\multicolumn{3}{l}{ \textbf{Case 1: } $\: \boldsymbol{ b_3 \neq 0}$, $\boldsymbol{b_2 \neq 0}$.} \smallskip\\
			\hline
			\hline
			\textbf{Sub.}& \textbf{Conditions} & \textbf{Classification} \\
			\hline
			\hline
			
			1.1
			&
			$b_0>0$, $c_0<0$.
			&
			$P_0$ saddle, $P_1$ unstable node,  $P_2$ stable node.\\
			\hline
			
			1.2
			&
			$b_0>0$, $c_0>0$, $c_0-b_0<0$.
			&
			$P_0$ unstable node, $P_1$ saddle, $P_2$ stable node.\\
			\hline

			1.3
			&
			$b_0>0$, $c_0>0$, $c_0-b_0>0$.
			&
			$P_0$ unstable node, $P_1$ stable node,  $P_2$ saddle.\\
			\hline

			1.4
			&
			$c_0=0$, $b_0>0$.
			&
			$P_0\equiv P_1$ saddle-node, $P_2$ stable node.\\
			\hline

			1.5
			&
			$b_0=0$, $c_0>0$.
			&
			$P_0\equiv P_2$ saddle-node,  $P_1$ stable node.\\
			\hline
			
		\end{tabular}
			\caption{Classification of the local phase portraits of the finite singular points of case 1 of Table \ref{tab:cases1-4}.}
		\label{tab:cases_fin_local_1}
		\end{center}
\end{table}	

\begin{table}[H]
	\begin{center}
	\begin{tabular}{|c|p{5.5cm}p{6.5cm}|}
			\multicolumn{3}{l}{ \textbf{Case 2: } $\: \boldsymbol{b_3\neq 0}$, $\boldsymbol{b_2=0}$, $\boldsymbol{b_0\neq 0}$.} \smallskip\\
			\hline
			\hline
			\textbf{Sub.}& \textbf{Conditions} & \textbf{Classification} \\
			\hline
			\hline
			
			2.1
			&
			$b_0>0$, $c_0<0$,  $c_0-b_0<0$.
			&
			$P_0$ saddle, $P_1$ unstable node.  \\
			\hline
			
			2.2
			&
			$b_0>0$, $c_0>0$,  $c_0-b_0<0$.
			&
			$P_0$ unstable node,  $P_1$ saddle.\\
			\hline
			
			2.3
			&
			$b_0>0$, $c_0>0$,  $c_0-b_0>0$.
			&
			$P_0$ unstable node, $P_1$ stable node.\\
			\hline
			
			2.4
			&
			$c_0=0$, $b_0>0$.
			&
			$P_0\equiv P_1$ saddle-node.\\
			\hline
			
		\end{tabular}
		\label{tab:cases_fin_local_2}
			\caption{Classification of the local phase portraits of the finite singular points of case 2 of Table \ref{tab:cases1-4}.}
			\end{center}
\end{table}

\begin{table}[H]
	\begin{center}
	\begin{tabular}{|c|p{5.5cm}p{6.5cm}|}
			\multicolumn{3}{l}{ \textbf{Case 3: } $\: \boldsymbol{b_3=0}$, $\boldsymbol{c_0\neq0}$, $\boldsymbol{b_2\neq 0}$.} \smallskip\\
			\hline
			\hline
			\textbf{Sub.}& \textbf{Conditions} & \textbf{Classification} \\
			\hline
			\hline

			3.1
			&
			$b_0>0$, $c_0<0$,  $c_0-b_0<0$.
			&
			$P_0$ saddle, $P_2$ stable node.\\
			\hline
			
			3.2
			&
			$b_0>0$, $c_0>0$, $c_0-b_0>0$.
			&
			$P_0$ unstable node, $P_2$ saddle. \\
			\hline
			
			3.3
			&
			$b_0>0$, $c_0>0$, $c_0-b_0<0$.
			&
			$P_0$ unstable node, $P_2$ stable node. \\
			\hline
			
			3.4
			&
			$b_0=0$, $c_0>0$.
			&
			$P_0\equiv P_2$ saddle-node. \\
			\hline
			
		\end{tabular}
			\caption{Classification of the local phase portraits of the finite singular points of case 3 of Table \ref{tab:cases1-4}.}
		\label{tab:cases_fin_local_3}
		\end{center}
\end{table}

\begin{table}[H]
	\begin{center}
	\begin{tabular}{|c|p{5.5cm}p{6.5cm}|}
			\multicolumn{3}{l}{ \textbf{Case 4: } $\: \boldsymbol{b_3=0}$, $\boldsymbol{c_0\neq 0}$, $\boldsymbol{b_2=0}$, $\boldsymbol{b_0\neq0}$.} \smallskip\\
			\hline
			\hline
			\textbf{Sub.}&\textbf{Conditions} & \textbf{Classification} \\
			\hline
			\hline
			
			4.1
			&
			$b_0>0$, $c_0<0$.
			&
			$P_0$ saddle.\\
			\hline
			
			4.2
			&
			$b_0>0$, $c_0>0$.
			&
			$P_0$ unstable node.\\
			\hline
			
		\end{tabular}
			\caption{Classification of the local phase portraits of the finite singular points of case 4 of Table \ref{tab:cases1-4}.}
		\label{tab:cases_fin_local_4}
		\end{center}
\end{table}

\section{Local phase portraits at the infinite singular points}\label{sec:infinite}

Here we study the local phase portrait at the infinite singular points, and as it was said previously, we work under the hypothesis $H$. 
The expression of the Poincaré compactification of systems \eqref{sis2mu} in the local chart $U_1$ according to equations \eqref{Poincare_comp_U1} is
\begin{equation}\label{system2U1}
	\begin{split}
		\dot{u}&= (c_0-b_0)uv^2,\\
		\dot{v}&= -b_3 u v^2 - b_0 v^3 - b_1 u v - b_2 v^2.
	\end{split}
\end{equation}
In the chart $U_2$ according to equations \eqref{Poincare_comp_U2} the expression is
\begin{equation}\label{system2U2}
	\begin{split}
		\dot{u}&= (b_0-c_0)uv^2,\\
		\dot{v}&= -b_2 u v^2 - c_0 v^3 - b_1  u v + b_3 v^2.
	\end{split}
\end{equation}
We want to study all the points at the infinity, which correspond with the line $v=0$ of these systems. To do that it is enough to study the singular points over $v=0$ in the chart $U_1$ and the origin of the chart $U_2$. 

We easily check in system \eqref{system2U1} that all points over the line $v=0$ are singular points. The eigenvalues of the Jacobian matrix at these singular points are both zero at the origin and at any other point $(u_0,0)$ the eigenvalues are  zero and $-b_1u_0$. If $b_1>0$  (respectively, $b_1<0$),  the nonzero eigenvalue is positive (respectively, negative) at the points on the negative $u$-axis, which correspond with the infinite singular points at the second and fourth quadrants of the Poincaré disc; 
the nonzero eigenvalue is negative for the infinite points at the first and third quadrants on the Poincaré disc (respectively, positive). 
For the singular points on the continuum of singular points at infinity that have a non-zero eigenvalue their local phase portrait is determined by Theorem \ref{th_normhyp}, while for the singular points with two eigenvalues being zero, their phase portrait is studied by eliminating the common factor of the systems \eqref{system2U1}, so that the singular point with the two zero eigenvalues is now an isolated singular point of the new differential systems. Then, by Theorem \ref{th_normhyp} we get the following result:

\begin{lemma}\label{lemma_inf}
	For all the infinite singular point of systems \eqref{sis2mu} distinct from the origin of the charts $U_1$ and $U_2$ the following statements hold.
	\begin{enumerate}
		\item[$\bullet$] If $b_1>0$, to  the points on the first and third quadrants arrives exactly one orbit from outside the infinity, and from the points on the second and fourth quadrants leaves exactly one orbit outside the infinity.
		\item[$\bullet$] If $b_1<0$,  from the points on the first and third quadrants leaves exactly one orbit outside the infinity, and to the points on the second and fourth quadrants arrives exactly one orbit from outside the infinity.
	\end{enumerate}
	
\end{lemma}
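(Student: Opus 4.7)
The plan is to apply Theorem \ref{th_normhyp} to the whole continuum $C=\{v=0\}$ of singular points in the chart $U_1$ (and, by the identical argument, in $U_2$), and then translate the resulting normal-hyperbolicity data back into the Poincar\'e disc.

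First I linearize \eqref{system2U1} at an arbitrary point $(u_0,0)$ with $u_0\neq 0$. A direct computation shows that all entries of the Jacobian vanish except $\partial\dot v/\partial v=-b_1u_0$, so its eigenvalues are $0$ and $-b_1u_0$, with eigenvectors $(1,0)$ and $(0,1)$ respectively. The first spans precisely $T_{(u_0,0)}C$, while the second is transverse to $C$; consequently $C$ is normally hyperbolic at $(u_0,0)$, with the one-dimensional hyperbolic eigenspace equal to $E^s$ when $-b_1u_0<0$ and to $E^u$ when $-b_1u_0>0$. Theorem \ref{th_normhyp} then produces a smooth invariant curve through $(u_0,0)$ transverse to $C$, along which the flow contracts (respectively expands) exponentially. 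Intersecting this curve with the open upper half-chart $\{v>0\}$, which is what the Poincar\'e disc actually sees, gives exactly one orbit that approaches (respectively leaves) $(u_0,0)$ from outside the line at infinity. Exactly the same argument, applied to \eqref{system2U2}, covers the infinite singular points visible only in $U_2$.

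It remains to match the sign of $-b_1u_0$ with the four quadrants of the disc. From the change of coordinates one has $(y,z)=(1/v,u/v)$ in $U_1$ and $(y,z)=(u/v,1/v)$ in $U_2$. Hence, with $v>0$, the half-line $u>0$ of $U_1$ lies in the first quadrant and the half-line $u<0$ of $U_1$ in the fourth; in $U_2$ the positive and negative $u$-axes land respectively in the first and second quadrants. The third quadrant does not appear directly in either $U_1$ or $U_2$, but it is the antipode of the first on the equator, and since the degree of \eqref{sis2mu} is $d=3$ (odd) the remark recalled in Section~\ref{sec:preliminaries} guarantees that antipodal infinite singular points have the same stability, so the conclusion transfers. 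Combining the two possible signs of $b_1$ with the sign of $u_0$ then yields exactly the two bullet points of the lemma.

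The only real obstacle I foresee is geometric bookkeeping rather than analysis: verifying that the invariant curve supplied by Theorem \ref{th_normhyp}, when cut by $\{v>0\}$, is a \emph{single} orbit rather than a two-sided piece leaking into the southern hemisphere, and keeping the sign conventions on $u$ in $U_1$, $U_2$ and their opposite charts $V_1$, $V_2$ consistent with the antipodal identification that lets us reduce the analysis to $U_1$ and $U_2$ alone.
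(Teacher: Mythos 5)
Your argument is essentially the paper's own proof: compute the Jacobian at $(u_0,0)$ to get eigenvalues $0$ and $-b_1u_0$, invoke Theorem \ref{th_normhyp} for the normally hyperbolic continuum $v=0$ to obtain exactly one transverse orbit, and match the sign of $-b_1u_0$ to the quadrants via the chart identifications and the antipodal symmetry for odd degree $d=3$. The paper does exactly this (if anything, more tersely), so your proposal is correct and follows the same route.
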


To study the origin of systems \eqref{system2U1} we eliminate a common factor $v$ from these systems and then study the singular points over the line $v=0$. We do that on Subsection \ref{subsec:O1} and there we prove Theorem \ref{lemma_O1}. The same occurs with the origin of the chart $U_2$, as the origin of systems \eqref{system2U2} is a singular point and the eigenvalues of the Jacobian matrix at that point are both zero. We study this point in Subsection \ref{subsec:O2} proving Theorem \ref{lemma_O2}. Note that Theorem \eqref{lemma_O1} and Theorem \eqref{lemma_O2} determine the local phase portrait at the origin of the charts $U_1$ and $U_2$ in the Poincaré disc, but also at the origins of charts $V_1$ and $V_2$.


\begin{theorem}\label{lemma_O1}
	The origin of systems \eqref{system2U1} is a singular point and it has $3$ topologically distinct local phase portraits, which taking into account the position of the sectors and orientation of the orbits give raise to the $8$ phase portraits described in Figure $\ref{fig:localppO1}.$
\end{theorem}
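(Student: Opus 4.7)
The plan is to resolve the degeneracy at the origin of system \eqref{system2U1} by cancelling the common factor $v$ in both components, since a direct computation shows that $v$ divides $\dot u$ and $\dot v$. After the time rescaling $d\tau = v\,d t$ (which preserves orbits and their orientation for $v>0$, the region of interest on the Poincar\'e disc near the equator), the resulting analytic system
\begin{equation*}
    u' = (c_0-b_0)\,uv, \qquad v' = -b_1 u - b_2 v - b_3 uv - b_0 v^2
\end{equation*}
has the origin as an \emph{isolated} singular point with Jacobian
$\bigl(\begin{smallmatrix} 0 & 0 \\ -b_1 & -b_2\end{smallmatrix}\bigr)$ and eigenvalues $0$ and $-b_2$. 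The local phase portrait of the original system at the origin is obtained from this one together with the fact that every point of $\{v=0\}$ is a singular point (by Lemma \ref{lemma_inf}).

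I would then split the analysis according to whether the origin of the reduced system is semi-hyperbolic or nilpotent. \textbf{Case $b_2>0$:} apply the semi-hyperbolic theorem (Theorem 2.19 of \cite{Libro}). The center manifold is tangent to $b_1u+b_2v=0$, and writing $v=h(u)=-(b_1/b_2)u+O(u^2)$ the restriction of the flow to it is $\dot u = -(c_0-b_0)(b_1/b_2)\,u^2+O(u^3)$. Because the hypothesis $H$ forces $c_0-b_0\neq 0$ and $b_1\neq 0$, the quadratic coefficient is nonzero and the origin is a saddle–node. \textbf{Case $b_2=0$:} then $H$ forces $b_0>0$, and together with $b_1\neq 0$ the linear part is nilpotent. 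I would then interchange coordinates (sending the invariant direction of the nilpotent block to a horizontal) to bring the system to the normal form required by Andreev's theorem (Theorem 3.5 of \cite{Libro}), compute the series $F(x)$ and $G(x)$, and read off the topological type. A preliminary computation suggests the first nonzero term of $F$ is $(c_0-b_0)b_0\,x^3$ and the first nonzero term of $G$ is $(c_0-3b_0)x$, so the type is determined by the sign of $c_0-b_0$ (producing either a nilpotent saddle or a configuration with elliptic/hyperbolic sectors).

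Once the three topological types produced by the two cases are identified, the eight realizations in Figure \ref{fig:localppO1} are obtained by bookkeeping: the sign of $b_1$ controls the orientation of trajectories along the continuum of singular points at infinity (by Lemma \ref{lemma_inf}) and hence exchanges the incoming/outgoing characters of the sectors, while the signs of the remaining parameters ($b_0$, $b_3$, $c_0-b_0$) determine the quadrants in which the hyperbolic and parabolic sectors are located, and finally the passage to the chart $V_1$ gives the images symmetric across the origin.

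I expect the main obstacle to be the nilpotent subcase: applying Andreev's theorem cleanly requires the right change of coordinates and careful tracking of signs through the quasihomogeneous expansions of $F$ and $G$, and checking that no further subdivisions appear within the parameter region $H$. A secondary difficulty is the translation from the local type of the desingularized system back to the original system — because all points on $\{v=0\}$ are singular for \eqref{system2U1}, the sectors at the origin have boundaries glued along the line of singular points, and verifying that the obtained portraits correctly reproduce the figures shown (in particular the correct counting of eight non-equivalent configurations) requires care.
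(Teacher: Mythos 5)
Your proposal follows essentially the same route as the paper: cancel the common factor $v$ to pass to system \eqref{sisU1-fc}, classify its origin as a semi-hyperbolic saddle-node when $b_2\neq 0$ (Theorem 2.19 of \cite{Libro}) and as a nilpotent saddle or an elliptic-plus-hyperbolic-sector point when $b_2=0$ (Theorem 3.5 of \cite{Libro}), and then reinstate the line of singular points $v=0$, reversing the orientation of the orbits where $v<0$, to obtain the eight configurations. Your explicit leading terms for $F$ and $G$ are consistent with the paper's conclusion that the nilpotent type is governed by the sign of $c_0-b_0$, so the argument is correct and no essentially different ideas are involved.
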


\begin{figure}[H] 
	\centering
	\scriptsize
	\stackunder[2pt]{\includegraphics[width=2.4cm]{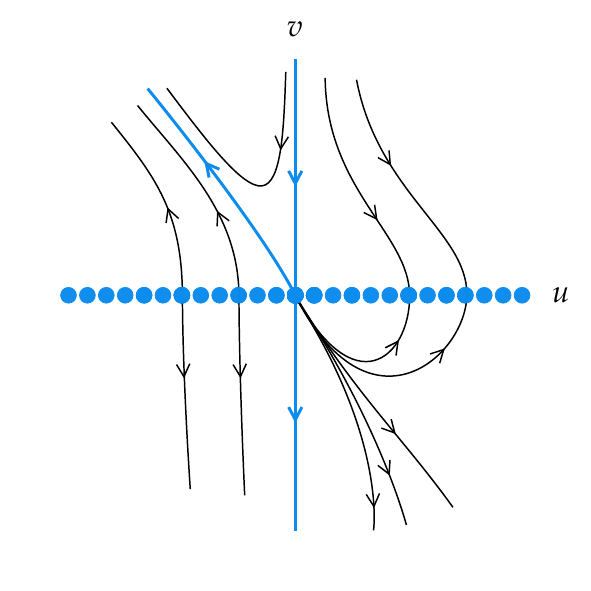}}{$L^1_{1}$}
	\stackunder[2pt]{\includegraphics[width=2.4cm]{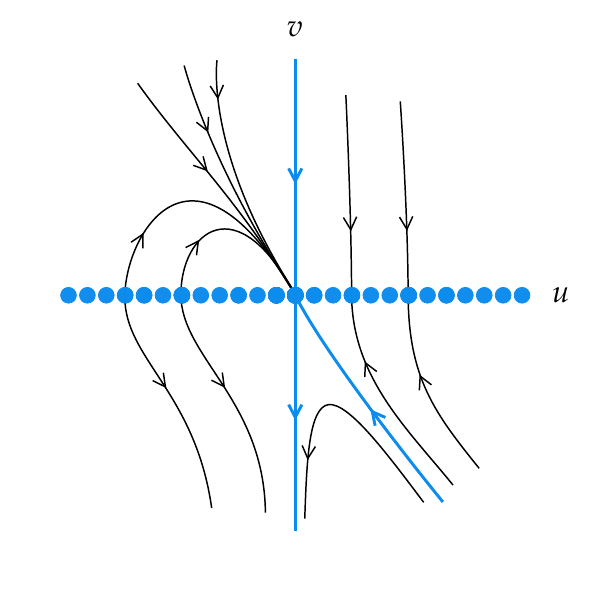}}{$L^1_{2}$}
	\stackunder[2pt]{\includegraphics[width=2.4cm]{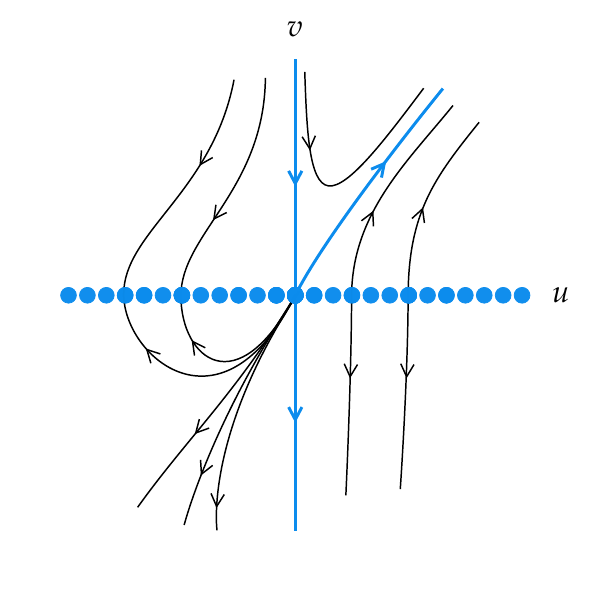}}{$L^1_{3}$}
	\stackunder[2pt]{\includegraphics[width=2.4cm]{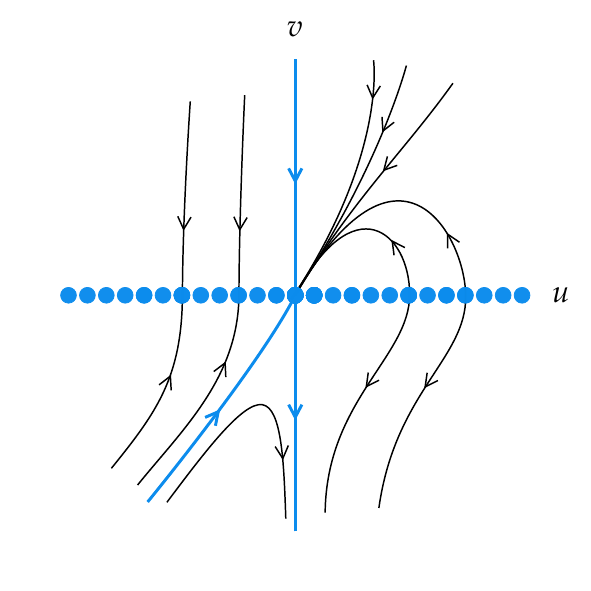}}{$L^1_{4}$}
	
	\vspace{0.3cm}
	
	\stackunder[2pt]{\includegraphics[width=2.4cm]{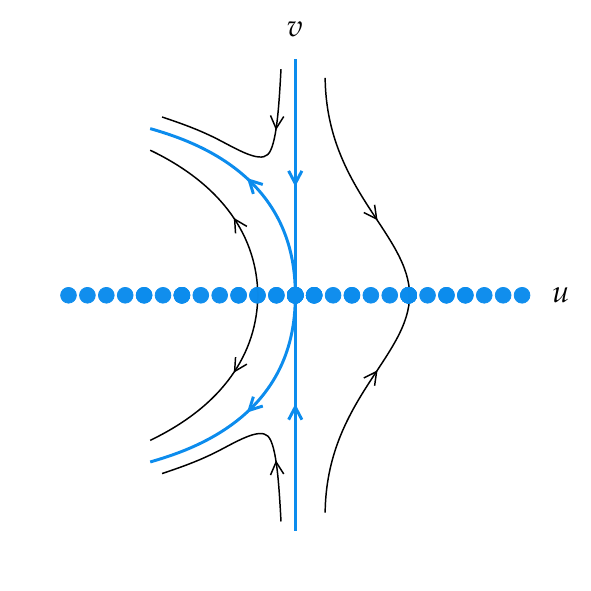}}{$L^1_{5}$}
	\stackunder[2pt]{\includegraphics[width=2.4cm]{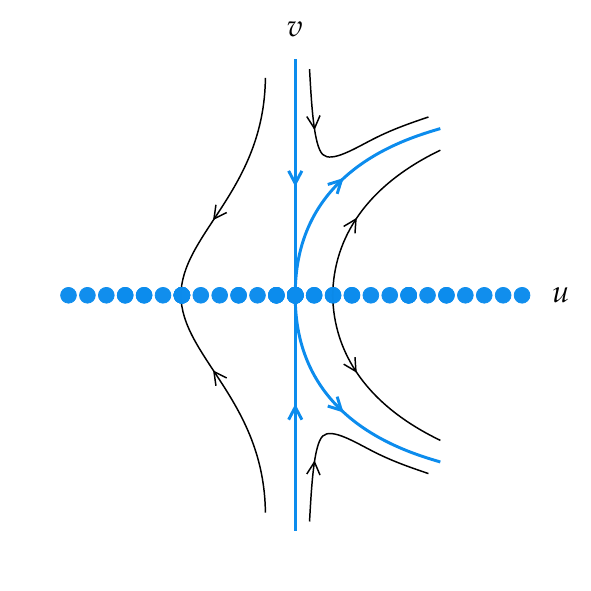}}{$L^1_{6}$}
	\stackunder[2pt]{\includegraphics[width=2.4cm]{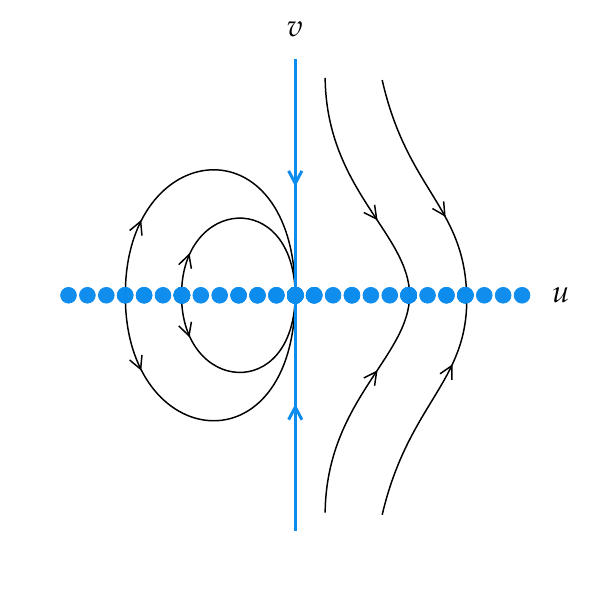}}{$L^1_{7}$}
	\stackunder[2pt]{\includegraphics[width=2.4cm]{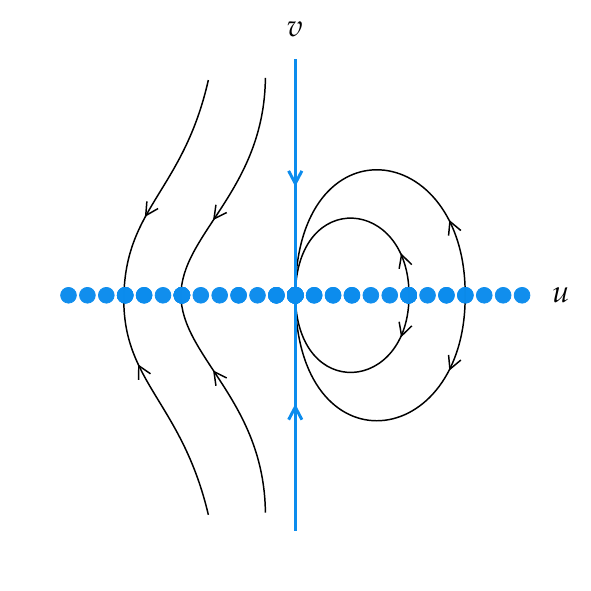}}{$L^1_{8}$}
	\caption{Local phase portraits at the infinite singular point $O_1$.}
	\label{fig:localppO1}
\end{figure}

\begin{remark}
	Note that phase portraits $L^1_1$ to $L^1_4$ correspond to the first equivalence class, $L^1_5$ and $L^1_6$ to the second class, and $L^1_7$ and $L^1_8$ to the third class.
\end{remark}

\begin{theorem}\label{lemma_O2}
	The origin of the chart $U_2$ is an infinite singular point of systems \eqref{sis2mu} and it has $3$ topologically distinct local phase portraits, which taking into account the position of the sectors and orientation of the orbits give raise to the $10$ phase portraits described in Figure $\ref{fig:localppO2}$.
\end{theorem}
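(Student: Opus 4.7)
The plan is to mirror the strategy used (in Subsection \ref{subsec:O1}) for the origin $O_1$ of chart $U_1$. In system \eqref{system2U2} every monomial in both components has total degree at least two, hence the Jacobian at the origin is the zero matrix and Hartman--Grobman is useless. However $v$ is a common factor of both equations, so the time rescaling $d\tau=v\,dt$, which preserves orbits (keeping orientation in $v>0$ and reversing it in $v<0$), produces the auxiliary system
\begin{equation*}
u'=(b_0-c_0)\,uv,\qquad v'=-b_1 u+b_3 v-b_2 uv-c_0 v^2,
\end{equation*}
whose origin is an isolated singular point with lower-triangular Jacobian of eigenvalues $0$ and $b_3$. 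Under the hypothesis $H$ we have $b_1\neq 0$ and $b_3^2+c_0^2\neq 0$, so the analysis splits naturally in two subcases.

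If $b_3>0$ the origin of the auxiliary system is semi-hyperbolic; a standard center-manifold reduction (solving $v'=0$ implicitly yields a curve $v=(b_1/b_3)u+O(u^2)$, which substituted into $u'$ produces a leading polynomial in $u$) and the semi-hyperbolic classification theorem then identify the origin as a saddle, a topological node, or a saddle-node, according to the sign of the first non-vanishing coefficient. If instead $b_3=0$ (hence $c_0\neq 0$), the linear part is nilpotent; solving $v'=0$ as $u=-(c_0/b_1)v^2+\cdots$ and substituting in $u'$ provides the expansion required to apply the nilpotent singular point theorem (see e.g.\ \cite{Libro}), from which the hyperbolic, elliptic and parabolic sectors of the origin can be read off.

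Finally I restore the original time. Only the half-disc $v\geq 0$ is visible in $U_2$, and the orientation of orbits there is preserved by the rescaling. The local picture at the origin is then glued to the behaviour along the continuum of singular points $v=0$ via Lemma \ref{lemma_inf} and Theorem \ref{th_normhyp}, which fix the unique attracting or repelling direction at every nearby infinite singular point depending on $\mathrm{sgn}(b_1)$ and on the quadrant. Sweeping through every admissible sign combination of the remaining parameters produces the $10$ portraits of Figure \ref{fig:localppO2}, and these cluster into the $3$ stated topological classes once the position of sectors and orientation of orbits are disregarded. I expect the nilpotent subcase to be the principal obstacle: the normal-form bookkeeping is the most delicate, and one must also check carefully that dividing by $v$ does not spuriously create or annihilate sectors along the line $v=0$, which is itself entirely composed of singular points of the original system.
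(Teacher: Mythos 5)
Your proposal follows essentially the same route as the paper: cancel the common factor $v$ in \eqref{system2U2}, note that the origin of the reduced system has eigenvalues $0$ and $b_3$, apply the semi-hyperbolic classification when $b_3\neq 0$ and the nilpotent one when $b_3=0$ (forced by $b_3^2+c_0^2\neq 0$ in $H$), and then restore the time rescaling and the line of singular points $v=0$. The only point left implicit is that the center-manifold computation you set up yields $\dot u=(b_0-c_0)(b_1/b_3)u^2+O(u^3)$ with nonzero quadratic coefficient under $H$, so in the semi-hyperbolic case the origin is always a saddle-node (never a saddle or a node), which is exactly what the paper concludes.
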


\begin{figure}[H] 
	\centering
	\scriptsize
	\stackunder[2pt]{\includegraphics[width=2.4cm]{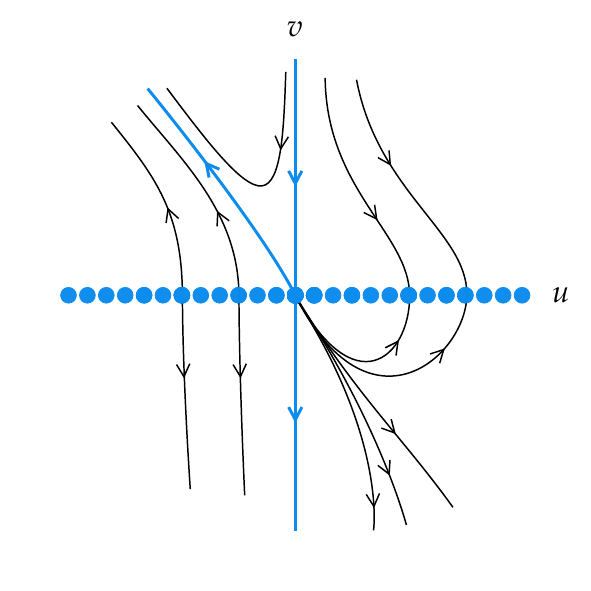}}{$L^2_{1}$}
	\stackunder[2pt]{\includegraphics[width=2.4cm]{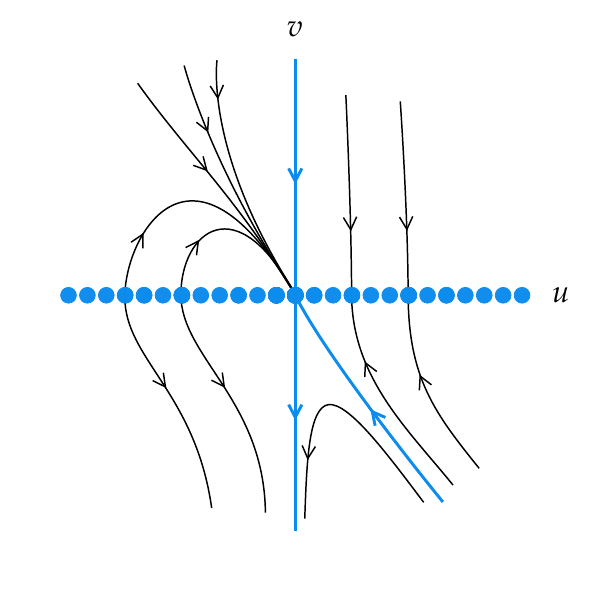}}{$L^2_{2}$}
	\stackunder[2pt]{\includegraphics[width=2.4cm]{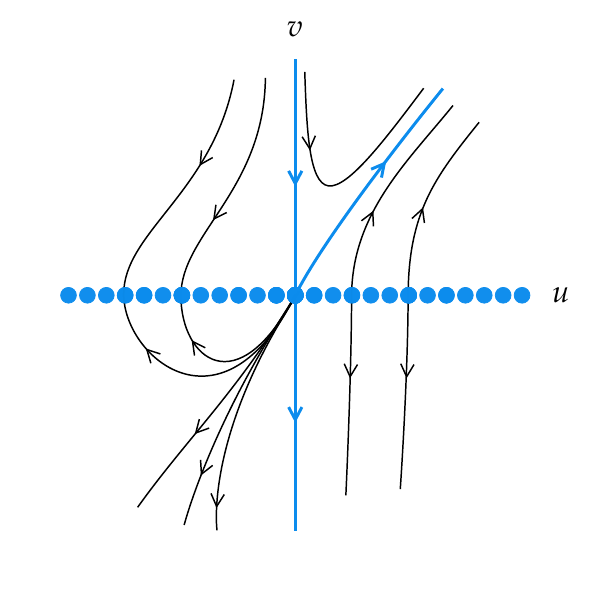}}{$L^2_{3}$}
	\stackunder[2pt]{\includegraphics[width=2.4cm]{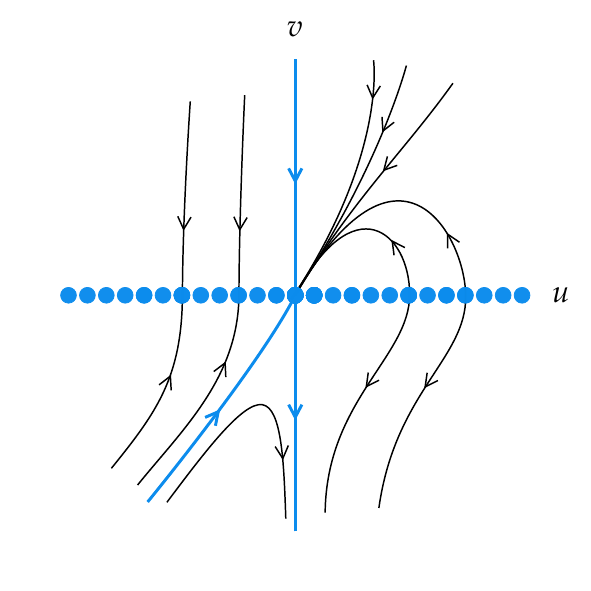}}{$L^2_{4}$}
	
	\vspace{0.3cm}
	
	\stackunder[2pt]{\includegraphics[width=2.4cm]{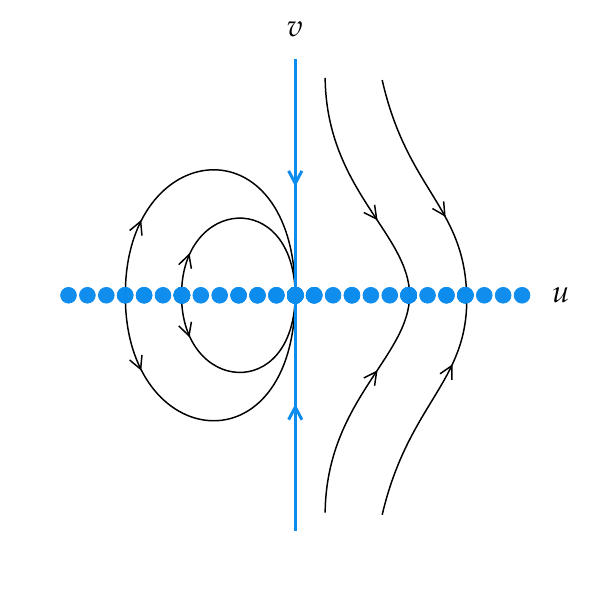}}{$L^2_{5}$}
	\stackunder[2pt]{\includegraphics[width=2.4cm]{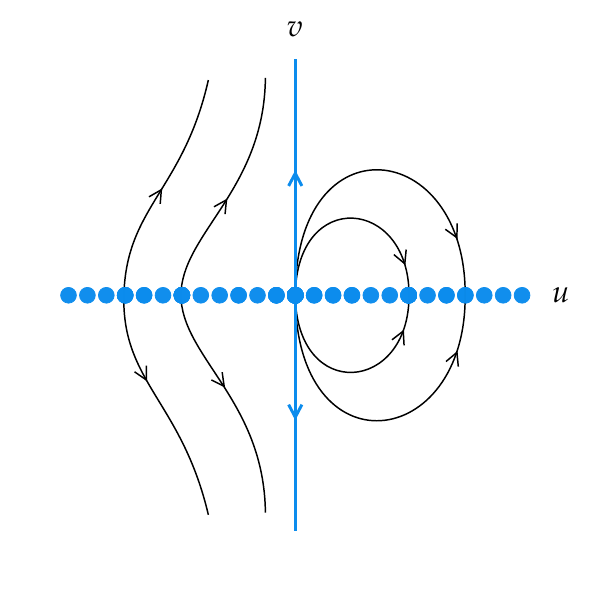}}{$L^2_{6}$}
	\stackunder[2pt]{\includegraphics[width=2.4cm]{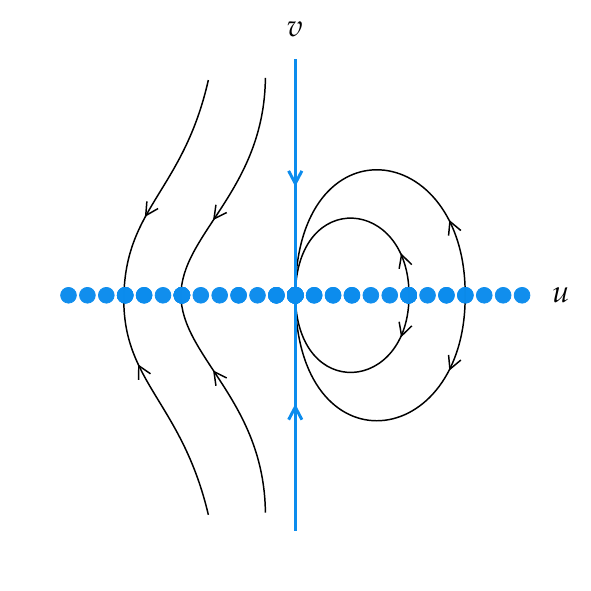}}{$L^2_{7}$}
	\stackunder[2pt]{\includegraphics[width=2.4cm]{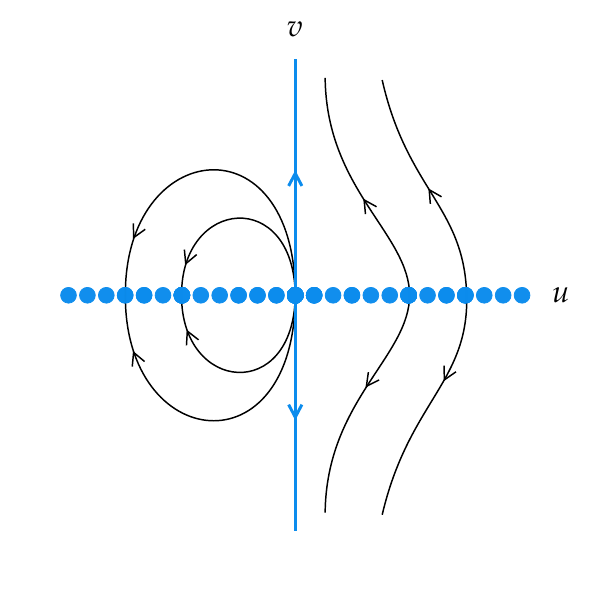}}{$L^2_{8}$}
	\stackunder[2pt]{\includegraphics[width=2.4cm]{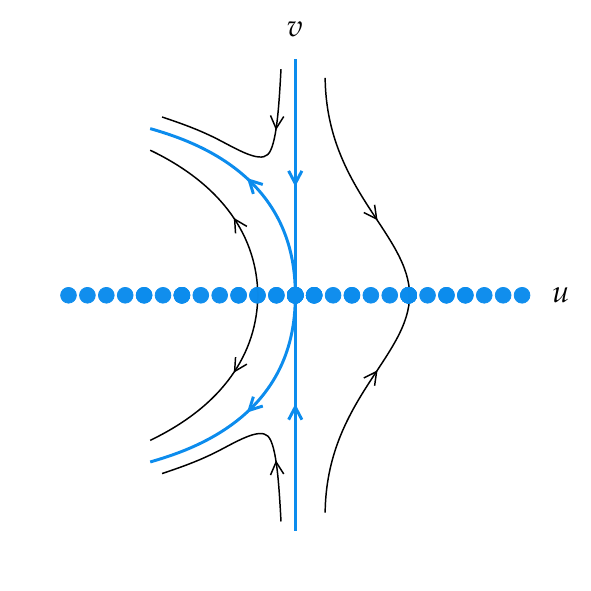}}{$L^2_{9}$}
	\stackunder[2pt]{\includegraphics[width=2.4cm]{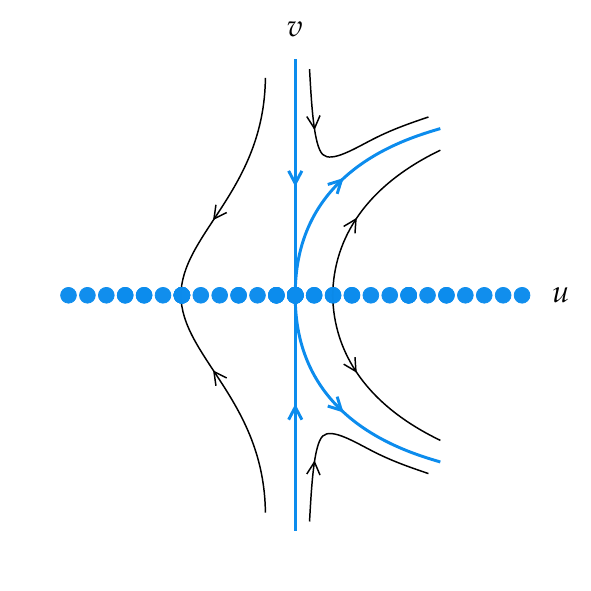}}{$L^2_{10}$}
	\caption{Local phase portraits at the infinite singular point $O_2$.}
	\label{fig:localppO2}
\end{figure}

\begin{remark}
	Note that phase portraits $L^1_1$ to $L^1_4$ correspond to the first equivalence class, $L^1_5$ to $L^1_8$ to the second class, and $L^1_9$ and $L^1_{10}$ to the third class.
\end{remark}

\subsection{Study of the origin of the chart $U_1$}\label{subsec:O1}

To study the origin of the chart $U_1$, first
we eliminate a common factor $v$ from systems \eqref{system2U1} obtaining: 
\begin{equation}\label{sisU1-fc}
	\begin{split}
		\dot{u}&= (c_0-b_0) u v, \\
		\dot{v}&= -b_3 u v - b_0 v^2 -b_1 u - b_2 v.
	\end{split}
\end{equation}
The only singular point of these systems over $v=0$ is the origin, and the eigenvalues of the Jacobian matrix at that point are zero and $-b_2$. Then this singular point can be semi-hyperbolic or nilpotent.

\vspace{0.2cm}
\textbf{ Semi-hyperbolic case}. If $b_2\neq0$ then the origin of systems \eqref{sisU1-fc} is semi-hyperbolic so its phase portrait can be determined by Theorem 2.19 in \cite{Libro}, concluding that it is always a saddle-node. In order to determine its local phase portrait it will be necessary to know the position of the different sectors and the orientation of the orbits in the saddle-node, so we must determine these depending on the parameters. 

If $b_1>0$, $c_0-b_0>0$ and $b_0=0$, then by the information given by the theorem and the sense of the flow in the different regions, the position of the sectors of the saddle-node and the orientation of the orbits for systems \eqref{sisU1-fc} is the one given in Figure \ref{fig:O1semi1}(a). To obtain the local phase portrait of the origin of the chart $U_1$ we must multiply by $v$, so that all the points over the $v$-axis  become singular points and the orbits on the third and fourth quadrants reverse their orientation. Thus we obtain the phase portrait of Figure \ref{fig:O1semi1}(b), which is also $L^1_1$ of Figure \ref{fig:localppO1}.

\begin{figure}[H] 
	\centering
	\scriptsize
	\stackunder[2pt]{\includegraphics[width=3cm]{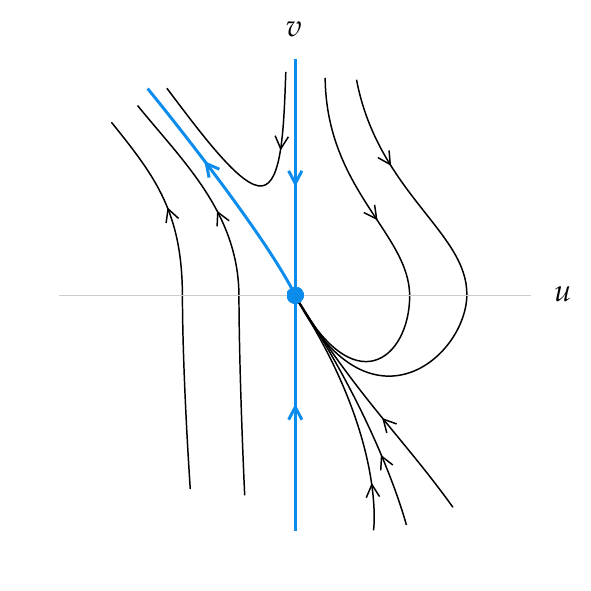}}{(a)}
	\stackunder[2pt]{\includegraphics[width=3cm]{RFO1/L_1_1}}{(b)}
	\caption{Local phase portraits of the origins of systems \eqref{sisU1-fc} and \eqref{system2U1} with $b_1>0$ $c_0-b_0>0$ and $b_0=0$.}
	\label{fig:O1semi1}
\end{figure}

If $b_1>0$, $c_0-b_0>0$ and $b_0>0$, the fact that the parameter $b_0$ is nonzero makes that systems \eqref{sisU1-fc} have a singular point on the negative $v$-axis, so that affects the phase portrait but not in a neighbourhood of the origin. Then we obtain the same phase portrait for $O_1$ as in the previous case, $L^1_1$.

Similarly we determine the position of the sectors and the orientation of the orbits in the remaining cases. If $b_1>0$, $c_0-b_0<0$ and $b_0>0$ we obtain the phase portrait $L^1_2$ of Figure \ref{fig:localppO1}. If $b_1<0$, $c_0-b_0>0$ and $b_0\geq0$ we obtain the phase portrait $L^1_3 $, and if $b_1<0$, $c_0-b_0<0$ and $b_0>0$ the phase portrait is $L^1_4$.

\vspace{0.2cm}
\textbf{Nilpotent case}.
If $b_2=0$ then the origin of systems \eqref{sisU1-fc} is nilpotent so its phase portrait can be determined by Theorem 3.5 in \cite{Libro}, which concludes that in this case the singular point is either a saddle or it has a local phase portrait consisting of a hyperbolic sector and an elliptic sector, depending on the parameters. It is also necessary to determine the position of the sectors and the orientation of the orbits, and in order to do that we must take into account the information given by the theorem and also analyze the sense of the flow in the different regions depending on the parameters. Once we have determined the local phase portrait for systems \eqref{sisU1-fc} we must multiply by $v$ so all the points over the line $v=0$ become singular points, and the orientation of the orbits on the third and fourth quadrants is reversed. Thus we obtain for $O_1$ the 8 phase portraits in Figure \ref{fig:localppO2} under the following conditions: 

If $b_2=0$, $b_0>0$, $b_1>0$, $c_0-b_0\neq2b_0b_1$ and $c_0-b_0>0$ the phase portrait at $O_1$ is $L^1_5$. We obtain the same phase portrait if $b_2=0$, $b_0>0$, $b_1>0$ and $c_0-b_0=2b_0b_1$.

If $b_2=0$, $b_0>0$, $b_1>0$, $c_0-b_0\neq2b_0b_1$ and $c_0-b_0<0$ the phase portrait at  $O_1$ is $L^1_7$. 

If $b_2=0$, $b_0>0$, $b_1<0$, $c_0-b_0\neq2b_0b_1$ and $c_0-b_0>0$ the phase portrait at $O_1$ is $L^1_6$. The same result is obtained for $b_2=0$, $b_0>0$, $b_1<0$ and $b_0-c_0=2b_0b_1$.

If $b_2=0$, $b_0>0$, $b_1<0$, $c_0-b_0\neq2b_0b_1$ and $c_0-b_0<0$   the phase portrait at $O_1$ is $L^1_8$.

\subsection{Study of the origin of the chart $U_2$}\label{subsec:O2}

As in the previous section to determine the phase portrait at the singular point $O_2$ we eliminate a common factor $v$ from systems \eqref{system2U2}. Then we study the singular points over the line $v=0$ of systems
\begin{equation}
	\begin{split}
		\dot{u}&=(b_0-c_0)u v,\\
		\dot{v}&=-b_2 u v - c_0 v^2 - b_1 u + b_3 v.
	\end{split}
\end{equation}
The only singular point is the origin, and it presents a similar behaviour than in the previous case: it is semi-hyperbolic if $b_3\neq0$ and it is nilpotent if $b_3=0$.  In the semi-hyperbolic case the singular point is always a saddle-node, and attending to the information given by Theorem 2.19 in \cite{Libro} and to the sense of the flow, we get four possibilities for the position and orientation of the sectors in the saddle-node, which are associated with their corresponding conditions in Table \ref{tab:condO2}. In the nilpotent case the singular point can be a saddle or have a hyperbolic and an elliptic sector. In the first case we found two possibilities for the position of the saddle, and in the second case we found four different cases attending to the position and orientation of the two sectors. The results are given in Table \ref{tab:condO2}.

\renewcommand{\arraystretch}{1.4}
\begin{table}[H]
	\begin{center}
\begin{tabular}{|l|c|}
			\hline
			\textbf{Conditions} & \textbf{ Phase portrait $O_2$} \\
			\hline
			\hline
			$ b_3 \neq 0$, $b_1>0$, $b_0-c_0>0$. & $L^2_1$  \\
			\hline
			$ b_3 \neq 0$, $b_1>0$, $b_0-c_0<0$, $c_0>0$. & $L^2_2$  \\	
			\hline
			$ b_3 \neq 0$, $b_1<0$, $b_0-c_0>0$. & $L^2_3$  \\	
			\hline
			$ b_3 \neq 0$, $b_1<0$, $b_0-c_0<0$, $c_0>0$. & $L^2_4$  \\	
			\hline
			$ b_3 \neq 0$, $c_0\neq0$, $b_1>0$, $b_0-c_0\neq 2b_1c_0$, $b_0-c_0<0$, $c_0>0$. & $L^2_5$  \\	
			\hline
			$ b_3 \neq 0$, $c_0\neq0$, $b_1>0$, $b_0-c_0\neq 2b_1c_0$, $b_0-c_0>0$, $c_0<0$.  & $L^2_6$  \\	
			\hline
			$ b_3 \neq 0$, $c_0\neq0$, $b_1>0$, $b_0-c_0\neq 2b_1c_0$, $b_0-c_0>0$, $c_0>0$.  &  	\multirow{2}{1cm}{\centering{$L^2_9$}}  \\	
			\cline{1-1}
			$ b_3 \neq 0$, $c_0\neq0$, $b_1>0$, $b_0-c_0=2b_1c_0$, $c_0>0$. &  	\multirow{2}{*}  {\phantom{a}}  \\	
			\hline
			$ b_3 \neq 0$, $c_0\neq0$, $b_1<0$, $b_0-c_0\neq 2b_1c_0$, $b_0-c_0<0$, $c_0>0$. &  	\multirow{2}{1cm}{\centering{$L^2_{10}$}}  \\	
			\cline{1-1}
			$ b_3 \neq 0$, $c_0\neq0$, $b_1<0$, $b_0-c_0=2b_1c_0$, $c_0>0$. &  	\multirow{2}{*}  {\phantom{a}}  \\	
			\hline
			$ b_3 \neq 0$, $c_0\neq0$, $b_1<0$, $b_0-c_0\neq 2b_1c_0$, $b_0-c_0>0$, $c_0>0$. & $L^2_7$  \\	
			\hline
			$ b_3 \neq 0$, $c_0\neq0$, $b_1<0$, $b_0-c_0\neq 2b_1c_0$, $b_0-c_0<0$, $c_0<0$. & $L^2_{8}$  \\	
			\hline
		\end{tabular}
			\caption{Conditions for each local phase portrait of $O_2$.}
		\label{tab:condO2}
		\end{center}
\end{table}

\section{Global phase portraits}\label{sec:global}

In this section we prove Theorem \ref{th_global} by obtaining all the possible global phase portraits from the local information obtained in Sections \ref{sec:finite} and \ref{sec:infinite}. In each case of Tables \ref{tab:cases_fin_local_1} to \ref{tab:cases_fin_local_4} we must consider two subcases by setting the sign of $b_1$, and once this sign is fixed the local phase portrait at the infinite singular points is determined by Lemma \ref{lemma_inf} and Theorems \ref{lemma_O1} and \ref{lemma_O2}. There is an exception to this which is case 4.2 in Table \ref{tab:cases_fin_local_4}, as in this case we must consider four subcases fixing also the sign of $c_0-b_0$. Thus we have 32 cases. 

According to Theorem \ref{th_MNP} we have to draw the separatrix configuration in each case. We recall that the separatrices are the finite and infinite singular points, the limit cycles and the separatrices of the hyperbolic sectors. Systems \eqref{sis2mu} do not have any limit cycles as if they had a limit cycle it must surround a finite singular point, but all the finite singular points are over invariant lines, particularly over the axes, so there are no limit cycles. Then we have to draw the local phase portraits of  the singular points and the separatrices of the hyperbolic sectors for which we have to determine their $\alpha$ and $\omega$-limits. In 30 of the 32 cases the place where the separatrices are born and die
is determined in a unique way, so we obtain the corresponding global phase portrait by drawing them and one orbit in each canonical region which does not have an infinite number of singular points in the boundary, and three orbits (representing the infinite number of them existing) in each canonical region with an infinite number of singular points in the boundary. 

The two remaining cases are 1.2 and 1.3 in Table \ref{tab:cases_fin_local_1}, with $b_1>0$. In these cases the $\alpha$ and $\omega$-limits are not determined in a unique way, and we can connect the sepatrices in three different ways. 

In case 1.2, if we fix $b_1>0$, we obtain the phase portraits G3, G4 and G5 of Figure \ref{fig:global}, depending on how we connect the separatrices on the third quadrant. We know from the local information that there is a separatrix which $\omega$-limit is the origin of the chart $V_1$ and a separatrix which $\alpha$-limit the saddle $P_1$ in the negative $z$-axis. Studying their possible $\alpha$ and $\omega$-limits, respectively, we obtain the configuration given in Figure \ref{fig:demsep}. Note that in the second case, the two separatrices are connected and so there is actually only one separatrix on the quadrant. 

If $b_2 = b_1 c_0 /b_3$ then the connection of the separatrices takes place on the invariant straight line $y=-c_0/b_3$ and we have the global phase portrait G4. The two remaining phase portraits can be obtained by perturbing just one parameter. For example setting the values $b_0=2$, $b_1=b_2=b_3=1$, we obtain phase portrait G3 for $c_0=1/2$, G4 for $c_0=1$ and G5 for $c_0=3/2$.


Similarly, if we fix $b_1>0$ in case 1.3,  we obtain three phase portraits, G7, G8 and G9 in Figure \ref{fig:global}. If $b_3=b_1b_0/b_2$ then the connection of the separatrices in G8 takes place on the invariant line $y=-b_0/b_2$.
Fixed the values $c_0=2$, $b_1=b_2=b_3=1$ we get the phase portrait G7 for $b_0=1/2$, G8 for $b_0=1$ and G9 for $b_0=3/2$.

\begin{figure}[H] 
	\centering
	\scriptsize
	\stackunder[2pt]{\includegraphics[width=3cm]{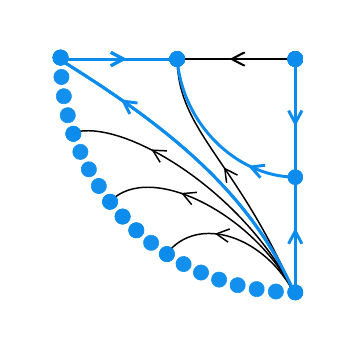}}{(a)}
	\stackunder[2pt]{\includegraphics[width=3cm]{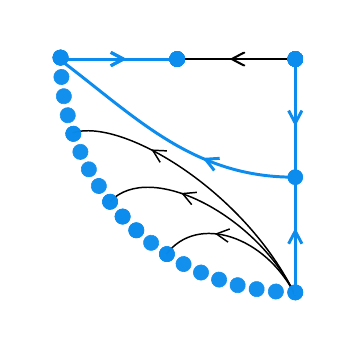}}{(b)}
	\stackunder[2pt]{\includegraphics[width=3cm]{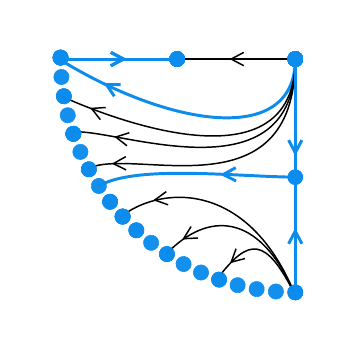}}{(c)}
	\caption{Possible configurations on the third quadrant on case 1.2 with $b_1>0$.}
	\label{fig:demsep}
\end{figure}

We include all global phase portraits obtained in Figure \ref{fig:global} and in Table \ref{tab:global} we indicate which of them are obtained in each case.

\begin{table}[H]
	\begin{center}
			\resizebox{0.7\textwidth}{!}{
	\begin{tabular}{|c|c|c|c|c|}
			\hline
			\small\textbf{Case}& 	\small\textbf{Subcase}& \small\textbf{$\boldsymbol{O_1}$}&  \small\textbf{$\boldsymbol{O_2}$} &  \small\textbf{Global} \\
			\hline
			\hline
			
			\multirow{2}{1cm}{\centering{1.1}}
			&
			$b_1>0$
			&
			\phantom{h} $L^1_2$ \phantom{h}
			&
			\phantom{h} $L^2_1$ \phantom{h}
			&
			G1 \\
			\cline{2-5}
			
			\multirow{2}{*}
			&
			$b_1<0$
			&
			$L^1_4$
			&
			$L^2_3$
			&
			G2\\
			\hline

			\multirow{2}{1cm}{\centering{1.2}}
			&
			$b_1>0$
			&
			$L^1_2$
			&
			$L^2_1$
			&
			G3, G4 or G5 \\
			\cline{2-5}
			
			\multirow{2}{*}
			&
			$b_1<0$
			&
			$L^1_4$
			&
			$L^2_3$
			&
			G6\\
			\hline
			
			\multirow{2}{1cm}{\centering{1.3}}
			&
			$b_1>0$
			&
			$L^1_1$
			&
			$L^2_2$
			&
			G7, G8 or G9 \\
			\cline{2-5}
			
			\multirow{2}{*}
			&
			$b_1<0$
			&
			$L^1_3$
			&
			$L^2_4$
			&
			G10\\
			\hline
			
			\multirow{2}{1cm}{\centering{1.4}}
			&
			$b_1>0$
			&
			$L^1_2$
			&
			$L^2_1$
			&
			G11 \\
			\cline{2-5}
			
			\multirow{2}{*}
			&
			$b_1<0$
			&
			$L^1_4$
			&
			$L^2_3$
			&
			G12\\
			\hline
			
			\multirow{2}{1cm}{\centering{1.5}}
			&
			$b_1>0$
			&
			$L^1_1$
			&
			$L^2_2$
			&
			G13 \\
			\cline{2-5}
			
			\multirow{2}{*}
			&
			$b_1<0$
			&
			$L^1_3$
			&
			$L^2_4$
			&
			G14\\
			\hline

			\multirow{2}{1cm}{\centering{2.1}}
			&
			$b_1>0$
			&
			$L^1_7$
			&
			$L^2_1$
			&
			G15 \\
			\cline{2-5}
			
			\multirow{2}{*}
			&
			$b_1<0$
			&
			$L^1_8$
			&
			$L^2_3$
			&
			G16\\
			\hline
			
			\multirow{2}{1cm}{\centering{2.2}}
			&
			$b_1>0$
			&
			$L^1_7$
			&
			$L^2_1$
			&
			G17 \\
			\cline{2-5}
			
			\multirow{2}{*}
			&
			$b_1<0$
			&
			$L^1_8$
			&
			$L^2_3$
			&
			G18\\
			\hline
			
			\multirow{2}{1cm}{\centering{2.3}}
			&
			$b_1>0$
			&
			$L^1_5$
			&
			$L^2_2$
			&
			G19 \\
			\cline{2-5}
			
			\multirow{2}{*}
			&
			$b_1<0$
			&
			$L^1_6$
			&
			$L^2_4$
			&
			G20\\
			\hline
			
			\multirow{2}{1cm}{\centering{2.4}}
			&
			$b_1>0$
			&
			$L^1_7$
			&
			$L^2_1$
			&
			G21 \\
			\cline{2-5}
			
			\multirow{2}{*}
			&
			$b_1<0$
			&
			$L^1_8$
			&
			$L^2_3$
			&
			G22\\
			\hline
			
			\multirow{2}{1cm}{\centering{3.1}}
			&
			$b_1>0$
			&
			$L^1_2$
			&
			$L^2_6$
			&
			G23 \\
			\cline{2-5}
			
			\multirow{2}{*}
			&
			$b_1<0$
			&
			$L^1_4$
			&
			$L^2_{8}$
			&
			G24\\
			\hline
			
			\multirow{2}{1cm}{\centering{3.2}}
			&
			$b_1>0$
			&
			$L^1_1$
			&
			$L^2_5$
			&
			G25 \\
			\cline{2-5}
			
			\multirow{2}{*}
			&
			$b_1<0$
			&
			$L^1_3$
			&
			$L^2_7$
			&
			G26\\
			\hline
			
			\multirow{2}{1cm}{\centering{3.3}}
			&
			$b_1>0$
			&
			$L^1_2$
			&
			$L^2_9$
			&
			G27 \\
			\cline{2-5}
			
			\multirow{2}{*}
			&
			$b_1<0$
			&
			$L^1_4$
			&
			$L^2_{10}$
			&
			G28\\
			\hline
			
			\multirow{2}{1cm}{\centering{3.4}}
			&
			$b_1>0$
			&
			$L^1_1$
			&
			$L^2_5$
			&
			G29 \\
			\cline{2-5}
			
			\multirow{2}{*}
			&
			$b_1<0$
			&
			$L^1_3$
			&
			$L^2_7$
			&
			G30\\
			\hline
			
			\multirow{2}{1cm}{\centering{4.1}}
			&
			$b_1>0$
			&
			$L^1_7$
			&
			$L^2_6$
			&
			G31\\
			\cline{2-5}
			
			\multirow{2}{*}
			&
			$b_1<0$
			&
			$L^1_8$
			&
			$L^2_{8}$
			&
			G32\\
			\hline
			
			\multirow{4}{1cm}{\centering{4.2}}
			&
			$b_1>0$, $c_0-b_0>0$
			&
			$L^1_5$
			&
			$L^2_5$
			&
			G33\\
			\cline{2-5}
			
			\multirow{4}{*}
			&
			$b_1>0$, $c_0-b_0<0$
			&
			$L^1_7$
			&
			$L^2_9$
			&
			G34\\
			\cline{2-5}
			
			\multirow{4}{*}
			&
			$b_1<0$, $c_0-b_0>0$
			&
			$L^1_6$
			&
			$L^2_7$
			&
			G35\\
			\cline{2-5}
			
			\multirow{4}{*}
			&
			$b_1<0$, $c_0-b_0<0$
			&
			$L^1_8$
			&
			$L^2_{10}$
			&
			G36\\
			\hline
		\end{tabular} }
			\caption{Classification of the global phase portraits of systems \eqref{sis2mu}.}
		\label{tab:global}
		\end{center}
\end{table}

\begin{figure}[H] 
	\centering
	\scriptsize
	\stackunder[2pt]{\includegraphics[width=3cm]{global/G1}}{(G1)}	\stackunder[2pt]{\includegraphics[width=3cm]{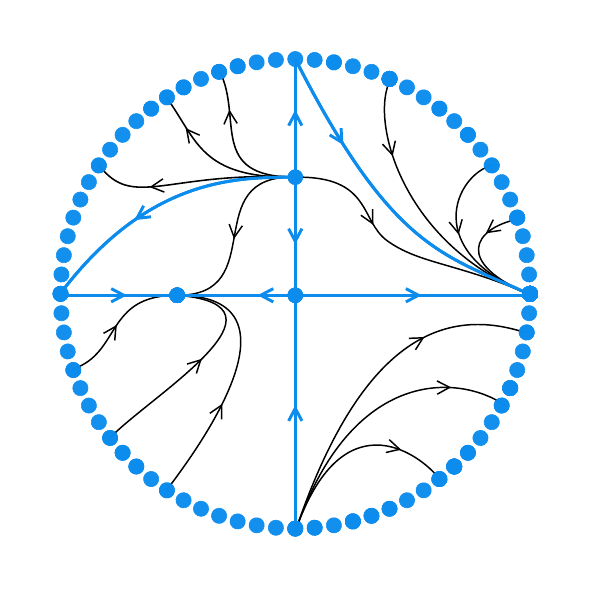}}{(G2)}
	\stackunder[2pt]{\includegraphics[width=3cm]{global/G3}}{(G3)}
	\stackunder[2pt]{\includegraphics[width=3cm]{global/G4}}{(G4)}
	\stackunder[2pt]{\includegraphics[width=3cm]{global/G5}}{(G5)}
	\stackunder[2pt]{\includegraphics[width=3cm]{global/G6}}{(G6)}
	\stackunder[2pt]{\includegraphics[width=3cm]{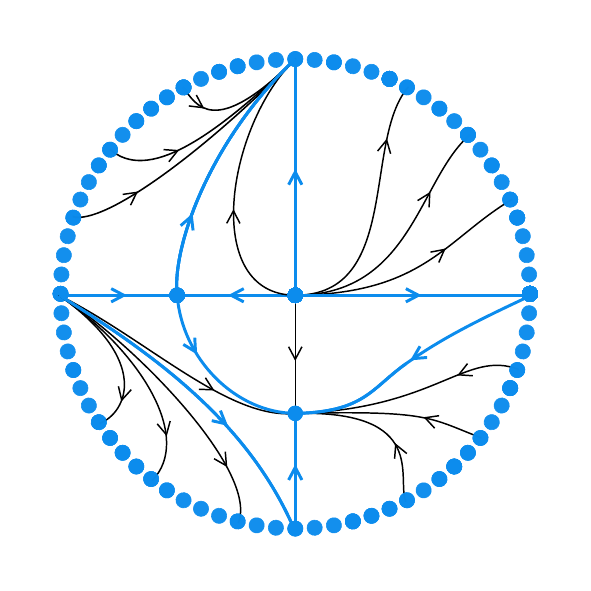}}{(G7)}
	\stackunder[2pt]{\includegraphics[width=3cm]{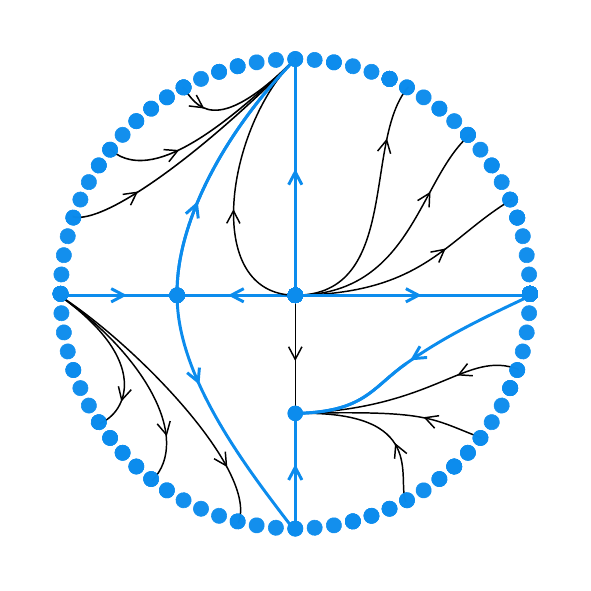}}{(G8)}
	\stackunder[2pt]{\includegraphics[width=3cm]{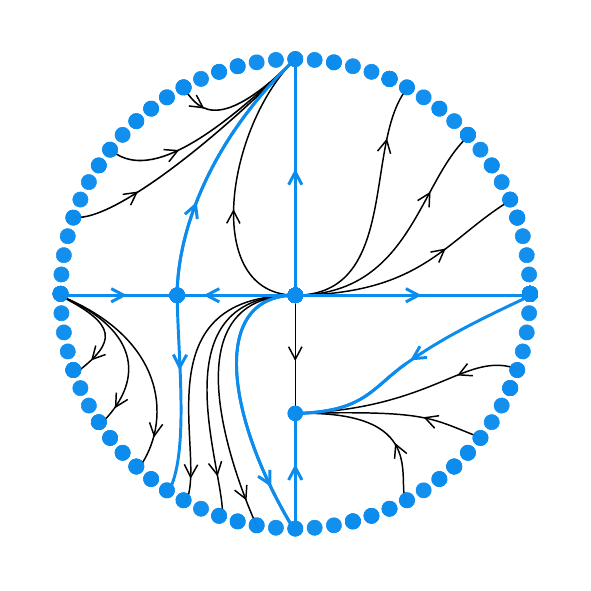}}{(G9)}
	\stackunder[2pt]{\includegraphics[width=3cm]{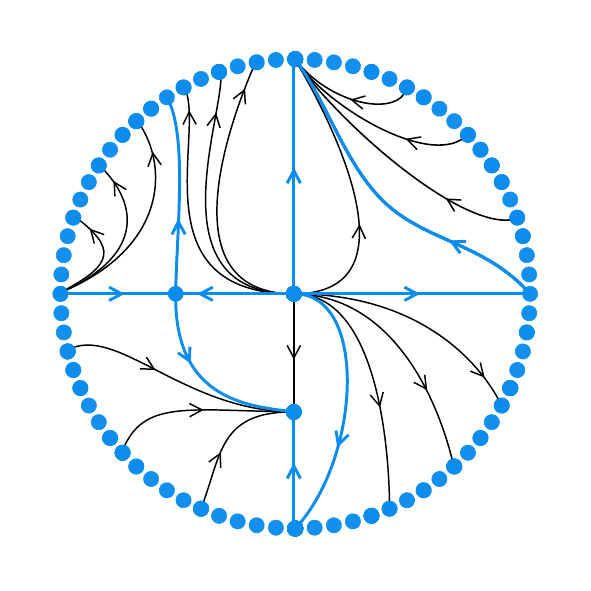}}{(G10)}
	\stackunder[2pt]{\includegraphics[width=3cm]{global/G11}}{(G11)}
	\stackunder[2pt]{\includegraphics[width=3cm]{global/G12}}{(G12)}
	\stackunder[2pt]{\includegraphics[width=3cm]{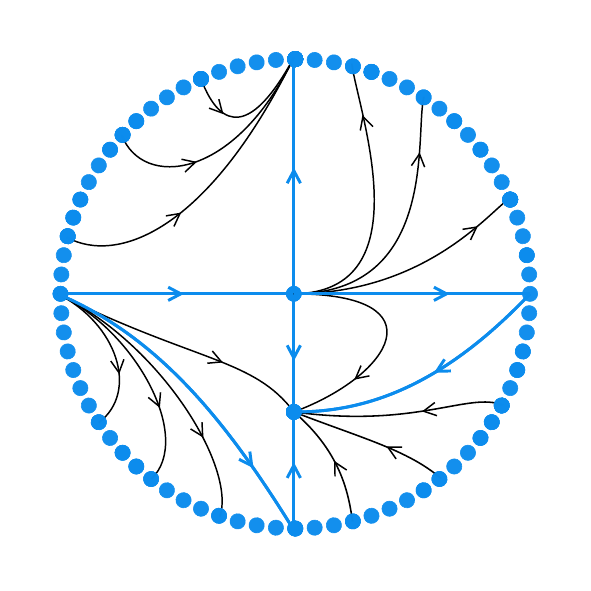}}{(G13)}
	\stackunder[2pt]{\includegraphics[width=3cm]{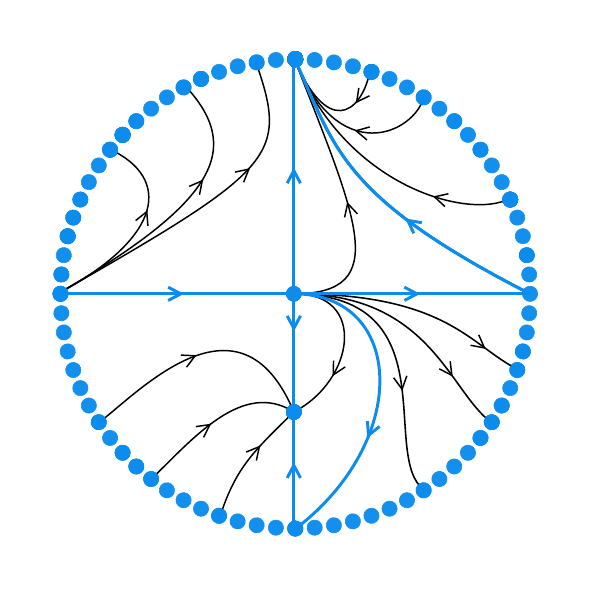}}{(G14)}
	\stackunder[2pt]{\includegraphics[width=3cm]{global/G15}}{(G15)}
	\stackunder[2pt]{\includegraphics[width=3cm]{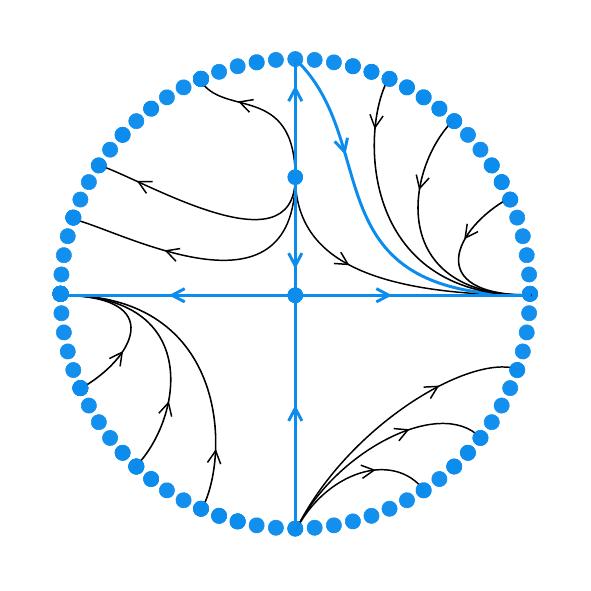}}{(G16)}
	\stackunder[2pt]{\includegraphics[width=3cm]{global/G17}}{(G17)}
	\stackunder[2pt]{\includegraphics[width=3cm]{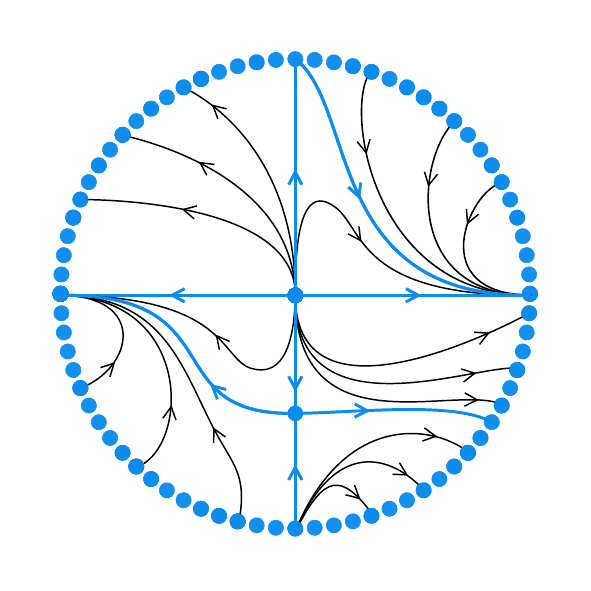}}{(G18)}
	\stackunder[2pt]{\includegraphics[width=3cm]{global/G19}}{(G19)}
	\stackunder[2pt]{\includegraphics[width=3cm]{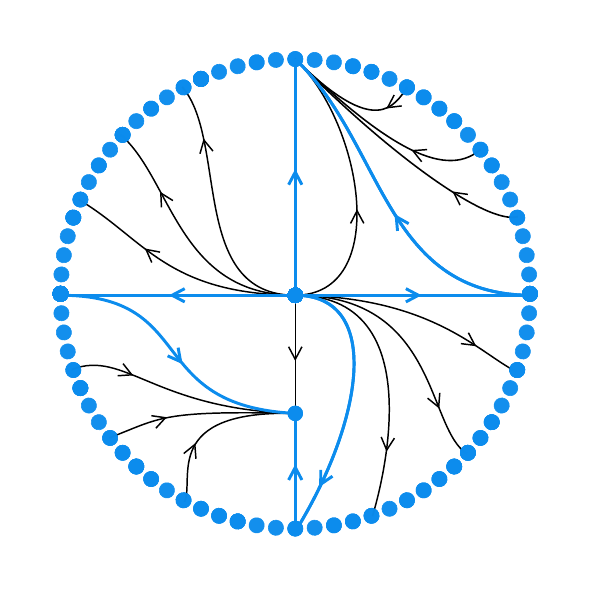}}{(G20)}
	\stackunder[2pt]{\includegraphics[width=3cm]{global/G21}}{(G21)}
	\stackunder[2pt]{\includegraphics[width=3cm]{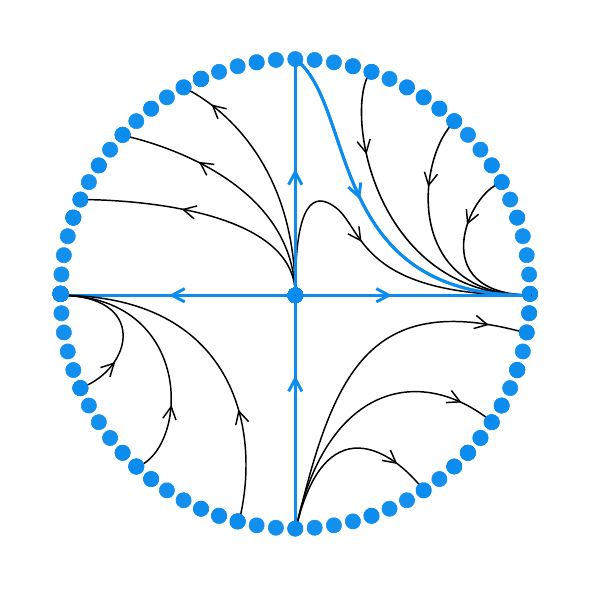}}{(G22)}
	\stackunder[2pt]{\includegraphics[width=3cm]{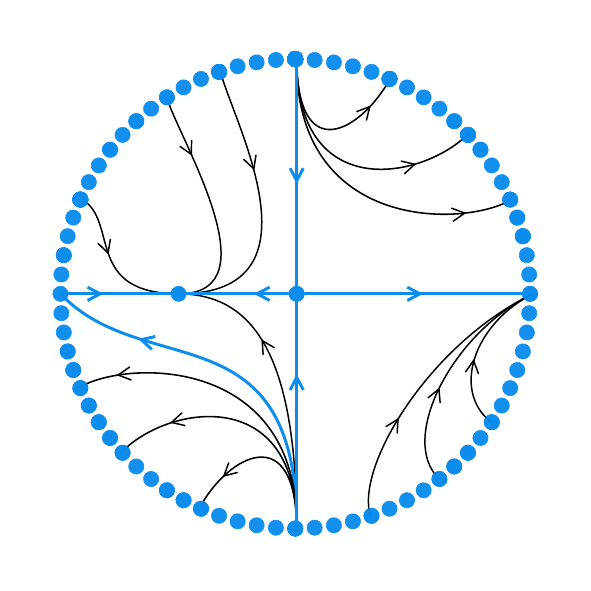}}{(G23)}
	\stackunder[2pt]{\includegraphics[width=3cm]{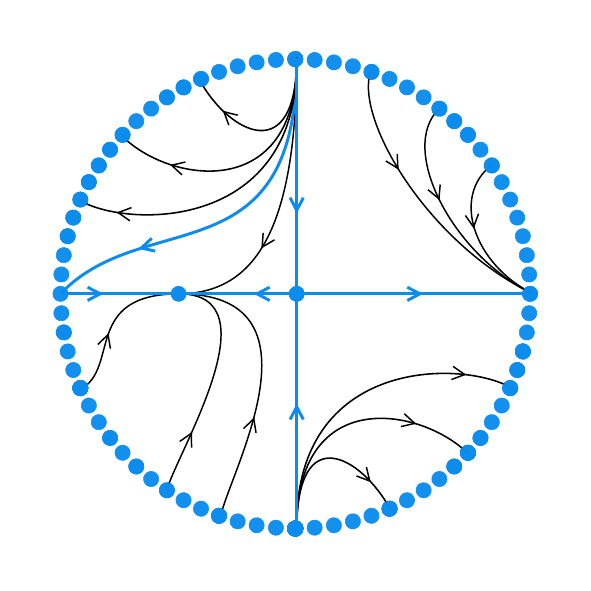}}{(G24)}
	\stackunder[2pt]{\includegraphics[width=3cm]{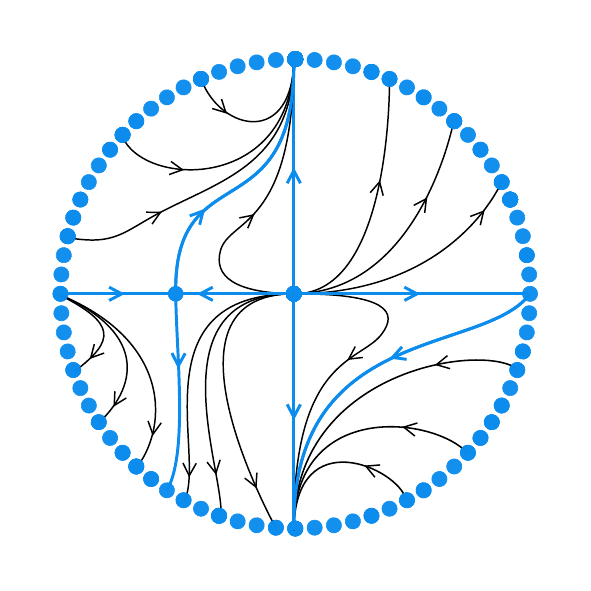}}{(G25)}
	\stackunder[2pt]{\includegraphics[width=3cm]{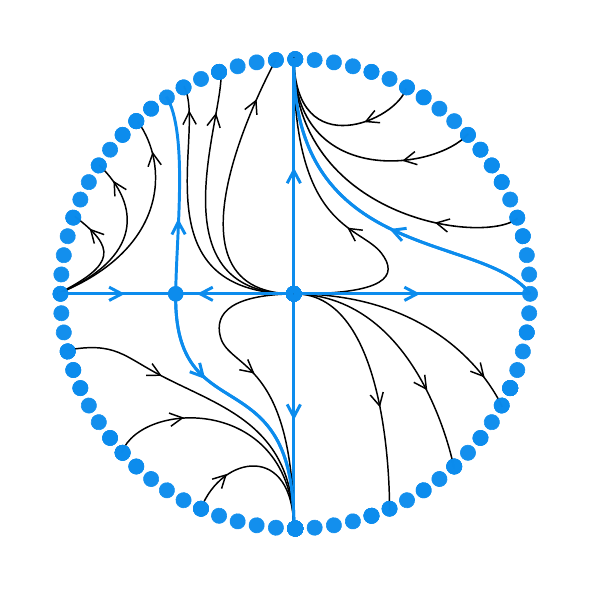}}{(G26)}
	\stackunder[2pt]{\includegraphics[width=3cm]{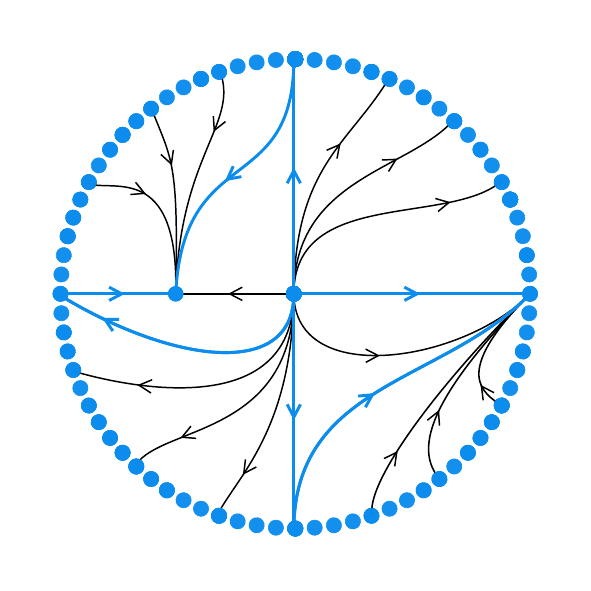}}{(G27)}
	\stackunder[2pt]{\includegraphics[width=3cm]{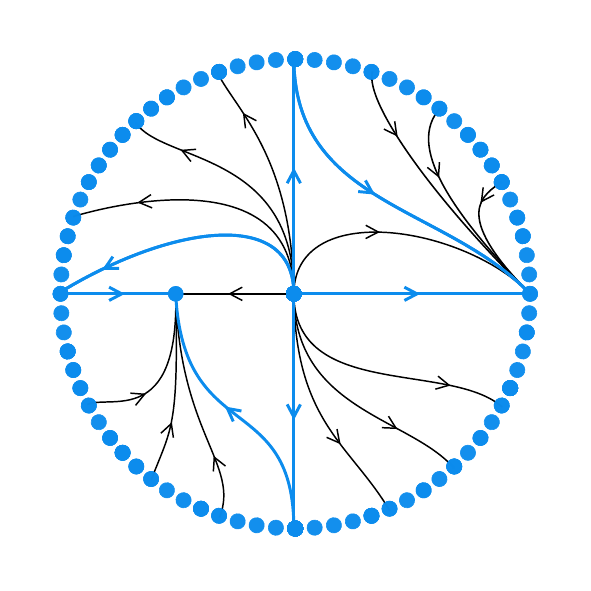}}{(G28)}
	\stackunder[2pt]{\includegraphics[width=3cm]{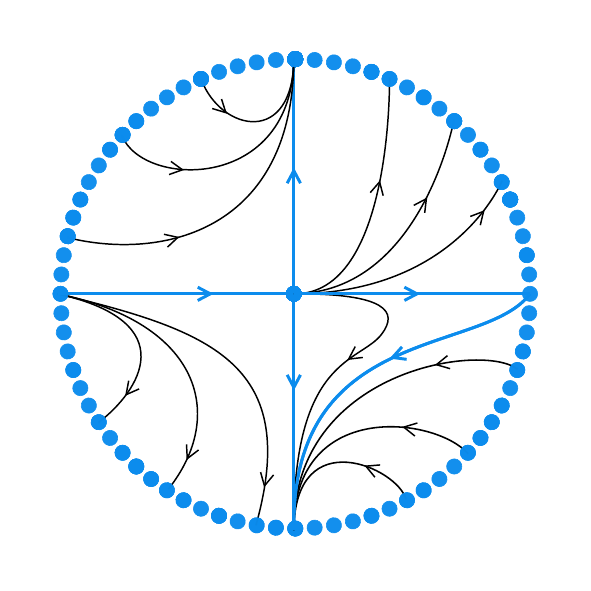}}{(G29)}
	\stackunder[2pt]{\includegraphics[width=3cm]{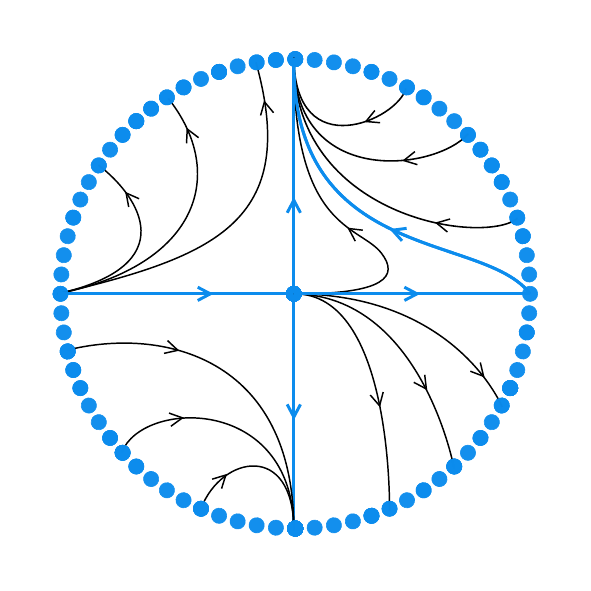}}{(G30)}
	\stackunder[2pt]{\includegraphics[width=3cm]{global/G31}}{(G31)}
	\stackunder[2pt]{\includegraphics[width=3cm]{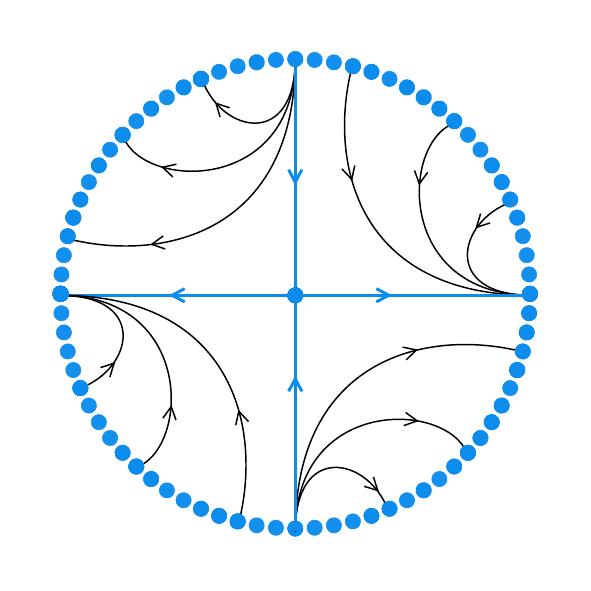}}{(G32)}
	\stackunder[2pt]{\includegraphics[width=3cm]{global/G33}}{(G33)}
	\stackunder[2pt]{\includegraphics[width=3cm]{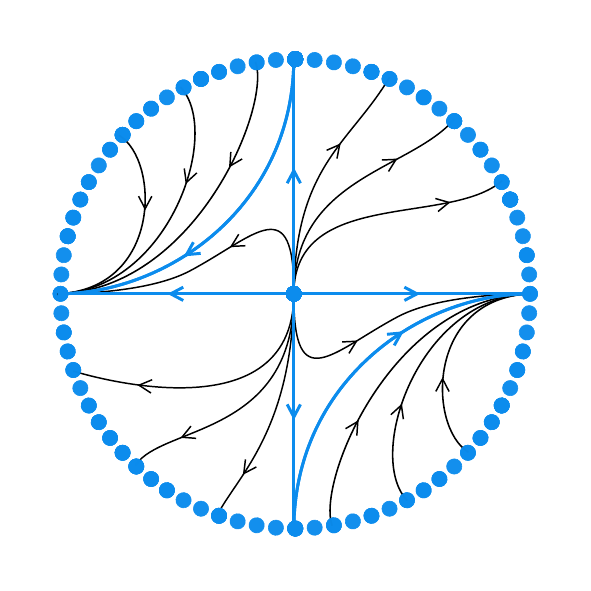}}{(G34)}
	\stackunder[2pt]{\includegraphics[width=3cm]{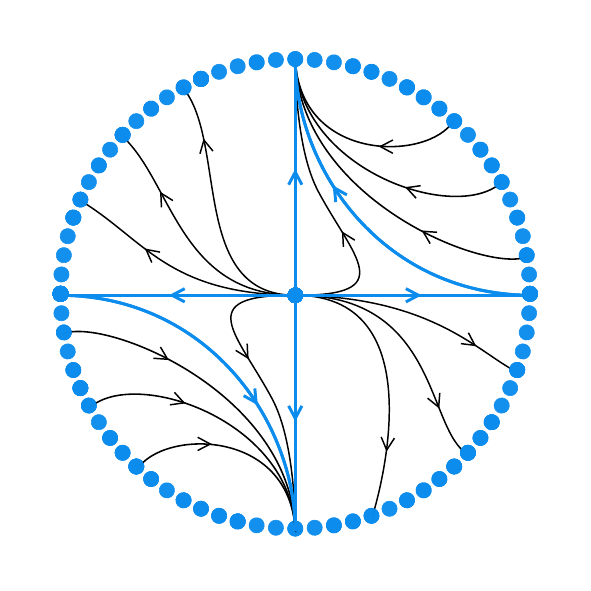}}{(G35)}
	\caption{Global phase portraits of systems \eqref{sis2mu} in the Poincar\'e disc.}
	\label{fig:global}
\end{figure}

We have obtained the 36 global phase portraits given in Figure \ref{fig:global} and now we study which of them are topologically equivalent. As Theorem \ref{th_MNP} only works in regions with a finite number of singular points, we will consider the equivalences on the open Poicaré disc, but this does not affect the result as if two separatrix configurations are topologically equivalent, they will be still equivalents if we add the boundary of the disc because the boundary is filled of singular points,and if they are not topologically equivalent they will not be equivalent by adding the boundary of the disc.

We will consider classes of equivalence according to the following invariants: the number of finite singular points and the sum of the indices at the finite singular points, denoted by $ind_F$. We give this first classification in Table \ref{tab:clases} and then within each class we prove which of the phase portraits are topologically equivalent. 

\begin{table}[h]
	\begin{tabular}{|c|c|c|l|}
			\hline
			\textbf{Class}& \textbf{Nº finite singular points} & \textbf{$\boldsymbol{ind_F}$} & \textbf{Global phase portraits}  \\
			\hline
			\hline
			1
			&
			3
			&
			1
			&
			G1, G2, G3, G4, G5, G6, G7, G8, G9, G10.
			\\
			\hline
			
			2
			&
			\multirow{3}{1cm}{\centering{2}}
			&
			1
			&
			G11, G12, G13, G14.
			\\
			\cline{1-1}\cline{3-4}

			3
			&
			\multirow{3}{*}
			&
			0
			&
			G15, G16, G17, G18, G23, G24, G25, G26.
			\\
			\cline{1-1}\cline{3-4}

			4
			&
			\multirow{3}{*}
			&
			2
			&
			G19, G20, G27, G28.
			\\
			\hline

			5
			&
			\multirow{3}{1cm}{\centering{1}}
			&
			0
			&
			G21, G22, G29, G30.
			\\
			\cline{1-1}\cline{3-4}

			6
			&
			\multirow{3}{*}
			&
			-1
			&
			G31, G32.
			\\
			\cline{1-1}\cline{3-4}

			7
			&
			\multirow{3}{*}
			&
			1
			&
			G33, G34, G35, G36.
			\\
			\hline

		\end{tabular}
		\caption{Classes of equivalence according to the number of finite singular points and to the $ind_F$. } 
		\label{tab:clases}
\end{table}

\textbf{Class 1.} First we can distiguish two subclasses depending on the number of separatrices in the open Poincaré disc. There are 11 separatrices in phase portraits G1, G2, G4 and G8, and 12 separatrices in the phase portraits G3, G5, G6, G7, G9 and G10. In the first subclass, G1 is topologically equivalent to G2 by doing a symmetry with respect to the line $z=-y$ and a change of the time variable $t$ by $-t$. G1 is different from G4 as in G1 there are two separatrices that start in the unstable node and in G4 there are three. 
G1 is also different from G8 as in G1 there are two separatrices of the saddle that connect with the infinity and in G8 there are three. At last G4 is topologically equivalent to G8 by doing a 90º rotation of G8 and then a symmetry with respect to the $z$-axis. In the second subclass, G3 is different from G5 as in G3 the saddle has two separatrices that connect with the infinity and in G5 it has three. By doing a symmetry with respect to the line $y=z$ we transform G3 intro G7, G5 into G9 and G6 into G10. G7 is different from G10 as in G7 there are three separatrices that start in the unstable node and in G10 there are four. G9 is different from G10 as in G10 there is a separatrix that connects two infinite singular points but in G9 there is not a such separatrix.

\vspace{0.2cm}

\textbf{Class 2.} G11 is different from G12 as in G11 the saddle-node has three separatrices that connect with infinite singular points and in G12 it has four. G11 is topologically equivalent to G13 and G12 to G14 by doing a symmetry with respect to the line $y=z$. 

\vspace{0.4cm}

\textbf{Class 3.} G15 is topologically equivalent to G16 and G17 to G18 by doing a symmetry with respect to the $z$-axis. G15 is different from G17 as in G15 there are two separatrices  that start at the node and in G17 there are four. G15 is topologically equivalent to G23 by doing a rotation of 90º in G15 and a change of the time variable $t$ by $-t$. We also can transform G25 into G18 by a rotation of 90º. Lastly we can transform G23 into G24 and G25 into G26 with a symmetry with respect to the $y$-axis.

\vspace{0.2cm}

\textbf{Class 4.} G19 is topologically equivalent to G20 by a symmetry with respect to the $z$-axis, G19 to G27 by a symmetry with respect to the line $z=y$ and G27 to G28 by a symmetry with respect to $y$-axis.

\vspace{0.2cm}

\textbf{Class 5.} G21 is topologically equivalent to G22 by a symmetry with respect to the $z$-axis, G21 to G29 by a symmetry with respect to the line $z=y$ and G29 to G30 by a symmetry with respect to $y$-axis.

\vspace{0.2cm}

\textbf{Class 6.} G31 is topologically equivalent to G32 by a symmetry with respect to the $z$-axis.

\vspace{0.2cm}

\textbf{Class 7.} G33 is topologically equivalent to G34 with a symmetry with respect to the line $z=y$, and by a symmetry with respect to the $z$-axis G33 is topologically equivalen to G35 and G34 to G36.

\vspace{0.2cm}

In summary, among these seven clases, we have found 13 topologically different phase portraits in the Poincaré disc for systems \eqref{sis2mu},so we have proved Theorem \ref{th_global}. These 13 distinct phase portraits are described in Figure \ref{fig:global_top}, where we include a representative of each one of the topological equivalence classes. These representatives correspond with the phase portraits in Figure \ref{fig:global} as follows:

\begin{table}[H]
	\centering
		\resizebox{0.3\textwidth}{!}{
	\begin{tabular}[t]{ |c | l |}
		\hline
		\textbf{Rep.}      & 	\textbf{Phase portraits }   \\ \hline  \hline
		R1   & G1, G2.  \\ \hline
		R2   & G3, G7.\\ \hline
		R3  & G4, G8. \\ \hline
		R4  & G5, G9.  \\ \hline
		R5 &  G6, G10 \\
		\hline
	\end{tabular} }\hfill%
	\resizebox{0.3\textwidth}{!}{
	\begin{tabular}[t]{ |c | l |}
		\hline
		\textbf{Rep.}        & 	\textbf{Phase portraits}     \\ \hline  \hline
		R6 &  G11, G13. \\ \hline
		R7 & G12, G14.  \\ \hline
		R8 & G15,G16, G23, G24.  \\ \hline
		R9 & G17, G18, G25, G26.  \\ 
		\hline
	\end{tabular}}\hfill%
	\resizebox{0.33\textwidth}{!}{
	\begin{tabular}[t]{| c | l |}
		\hline
		\textbf{Rep.  }     & 	\textbf{Phase portraits  }   \\ \hline  \hline
		R10 & G19, G20, G27, G28. \\ \hline
		R11 & G21, G22, G29, G30.  \\ \hline
		R12 & G31, G32.  \\ \hline
		R13 &  G33, G34, G35, G36. \\ 
		\hline
	\end{tabular} }\hfill%
	\caption{Representatives of each equivalence class and their corresponding global phase portraits of systems \eqref{sis2mu}.}
	\label{tab:rep}
\end{table}

Now we give a second proof that shows that these 13 phase portraits are indeed topologically distinct. 

\begin{theorem}
	The 13 phase portraits of systems \eqref{sis2mu} included in Figure \ref{fig:global_top} are topologically distinct.
\end{theorem}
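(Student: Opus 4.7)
The plan is to exhibit topological invariants that distinguish the 13 representatives, building on the coarse classification already given in Table \ref{tab:clases}. The first step is free: the seven classes in Table \ref{tab:clases} are already pairwise distinguished by the pair of invariants (number of finite singular points, index sum $ind_F$), since any homeomorphism witnessing a topological equivalence between vector fields in the open Poincaré disc must send finite singular points to finite singular points and preserve the local index. Hence classes 4, 5, 6 and 7, which contain a single representative each ($R_{10}$, $R_{11}$, $R_{12}$, $R_{13}$), require nothing further.

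The remaining work is to separate the representatives living inside a common class, i.e.\ $R_{1},\dots,R_{5}$ in class 1, $R_{6},R_{7}$ in class 2, and $R_{8},R_{9}$ in class 3. For this I would use finer invariants which are preserved by any orbit-preserving homeomorphism of the open Poincaré disc that respects the infinity: the number of separatrices in the open disc, the number of canonical regions, and (most useful in practice) the combinatorial type of the separatrix skeleton attached to each finite singular point, namely how many separatrices are born at or arrive to a given finite singular point and whether their opposite limit lies at another finite singular point or at infinity. These are invariants precisely because a topological equivalence restricted to the open disc is a homeomorphism carrying the separatrix configuration of one system onto that of the other, by Theorem \ref{th_MNP}.

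For class 1, I would tabulate for each of $R_{1},\dots,R_{5}$ the triple (number of separatrices in the open disc; number of separatrices emanating from the unstable node; number of separatrices of the saddle whose other end lies at infinity). Inspection of Figure \ref{fig:global_top} yields distinct triples for the five phase portraits: for instance $R_{1}$ has 11 separatrices with only two leaving the unstable node, $R_{3}$ has 11 separatrices with three leaving the node, while $R_{2}$, $R_{4}$, $R_{5}$ each have 12 separatrices and differ in how many saddle separatrices escape to infinity and whether a separatrix joins two infinite singular points. An analogous, shorter table handles classes 2 and 3: in class 2 the representatives $R_{6}$ and $R_{7}$ differ in the number of separatrices of the saddle-node that end at infinity (three vs.\ four), and in class 3 the representatives $R_{8}$ and $R_{9}$ differ in the number of separatrices emerging from the node (two vs.\ four).

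The main obstacle is verifying that the numerical data I read off from the pictures are genuinely topological invariants and are correctly read, rather than artifacts of the drawing. To close this gap rigorously I would, in each case, identify the separatrices via the local classifications of Theorems \ref{lemma_O1} and \ref{lemma_O2} together with the behaviour at the finite singular points tabulated in Section \ref{sec:finite}, and then use the $\alpha$- and $\omega$-limit analyses carried out in Section \ref{sec:global} to certify the connection pattern. Once the invariants are recorded, the argument reduces to a direct pairwise comparison inside each of the seven classes, completing the proof.
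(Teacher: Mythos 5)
Your proposal is correct and follows essentially the same route as the paper: it first separates the seven classes by the pair (number of finite singular points, $ind_F$), exactly as in Table \ref{tab:clases}, and then distinguishes the representatives within classes 1--3 by separatrix-counting invariants (total number of separatrices, separatrices at the node, saddle separatrices reaching infinity, connections between infinite singular points) that coincide with those used in the paper's class-by-class discussion and in its formal list of invariants $I_1$--$I_6$. The specific numerical values you read off also agree with the paper's, so no gap remains.
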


\begin{proof}
	We will consider six geometrical invariants in order to distinguish the phase portraits.
	
	\begin{enumerate}
		
		\item[$(I_1)$] Number of finite singularities. The values of this invariant for the phase portraits R1 to R13 are, respectively: (3, 3, 3, 3, 3, 2, 2, 2, 2, 2, 1, 1, 1).

		\vspace{0.1cm}
		
		\item[$(I_2)$] Sum of the index of the finite singularities. The values of this invariant for the phase portraits R1 to R13  are (1, 1, 1, 1, 1, 1, 1, 0, 0, 2, 0, -1, 1). 
		
	\end{enumerate}
	
	With these two invariants we can already determine that the phase portraits R10, 
	R11, R12 and R13 are topologically distinct between them and from all the others. We will not determine other invariants for them.

	\begin{enumerate}
		
		\item[$(I_3)$] Separatrices of the finite singularities connected with finite nodes. For the phase portraits R1 to R9 this invariant has the values:  (2, 2, 1, 1, 2, 1, 1, 1, 1).

		\vspace{0.1cm}

		\item[$(I_4)$]  Number of connections between separatrices of the finite singularities and separatrices of infinite singularities. The values of this invariant for the phase portraits R1 to R9 are: (2, 1, 2, 1, 1, 2, 2, 3, 1). 
		
	\end{enumerate}
	
	With the four previous invariants we can guarantee that the phase portraits R1, R3, R4, R8 and R9 are topologically distinct. Among the remaining phase portraits, R2 has the same invariants as R5 and R6 the same as R7, so we will distinguish between them with the two following invariants.

	\begin{enumerate}
		
		\item[$(I_5)$] Number of infinite singularities receiving an infinite number of orbits from a finite singularity. This invariant is 2 for R2 and 1 for R5.
		
		\vspace{0.1cm}
		
		\item[$(I_6)$]  Number of separatrices that leave the finite saddle-node in its parabolic sector and go to an infinite singular point. This invariant is 1 for R6 and 2 for R7.
		
	\end{enumerate}
	
	Then we have proved that all the 13 phase portraits are topologically distinct as they have different values for the mentioned invariants.
	
\end{proof}

\section*{Acknowledgements}

We thank to the reviewer his/her comments and suggestions which help us to improve the presentation of our results

The first and third authors are partially supported by the Ministerio de Ciencia e Innovación, Agencia Estatal de Investigación (Spain), grant PID2020-115155GB-I00 and the Consellería de Educación, Universidade e Formación Profesional (Xunta de Galicia), grant ED431C 2019/10 with FEDER funds. The first author is also supported by the Ministerio de Educacion, Cultura y Deporte de España, contract FPU17/02125.

The second author is partially supported by the Ministerio de Ciencia, Innovaci\'on y Universidades, Agencia Estatal de Investigaci\'on grant PID2019-104658GB-I00, the Ag\`encia de Gesti\'o d'Ajuts Universitaris i de Recerca grant 2017SGR1617, and the H2020 European Research Council grant MSCA-RISE-2017-777911.

\end{document}